\newcommand{\la}{\langle}
\newcommand{\ra}{\rangle}
\newcommand{\pr}{\partial}
\newcommand{\dom}{\Omega}
\newcommand{\re}{\mathfrak{Re}\;}
\newcommand{\tr}{{\mathfrak{Tr}\,}}
\newcommand{\X}{{\mathcal{X}}}
\newcommand{\s}{{\mathcal{S}}}
\newcommand{\T}{{\mathcal{T}}}
\newcommand{\R}{\mathbb{R}}
\newcommand{\C}{\mathbb{C}}
\newtheorem{thm}{Theorem}
\newtheorem{prop}{Proposition}[section]
\newtheorem{lem}{Lemma}[section]
\newtheorem{cor}{Corollary}[section]
\title{Recovery of coefficients for a weighted p-Laplacian perturbed by a linear second order term}
\author{C\u{a}t\u{a}lin I. C\^{a}rstea\thanks{School of Mathematics, Sichuan University, Chengdu, Sichuan, 610064, P.R.China; email: catalin.carstea@gmail.com} \and Manas Kar\thanks{Indian Institute of Science Education and Research Bhopal, Indore By-pass Road, Bhauri Bhopal, 462066,
Madhya Pradesh, India; email: manas@iiserb.ac.in}}\date{}
\begin{document}
\maketitle

\begin{abstract}
This paper considers the inverse boundary value problem for the equation $\nabla\cdot(\sigma\nabla u+a|\nabla u|^{p-2}\nabla u)=0$. We give a procedure for the recovery of the coefficients $\sigma$ and $a$ from the corresponding Dirichlet-to-Neumann map, under suitable regularity and ellipticity assumptions.
\end{abstract}

\section{Introduction}

Let $\dom\subset\R^n$, $n\geq3$, be a bounded domain with smooth boundary $\pr\dom$. In this domain, we may consider the following quasilinear boundary value problem for the function $u$
\begin{equation}\label{eq-general}
\left\{\begin{array}{l} \nabla\cdot\left(J(x,u,\nabla u)\right)=0,\\[5pt] u|_{\pr\dom}=f.\end{array}\right.
\end{equation}
Here we assume $J$ is a vector valued function.

In the linear case $J(x,u,\nabla u)=\sigma(x) \nabla u$, with a function $\sigma$ that has both upper and lower bounds that are positive, \eqref{eq-general} may be interpreted as the equation describing conduction in an object whose spatial extension coincides with $\dom$, and whose (possibly inhomogeneous) conductivity matrix is $\sigma$. In this case $u$ would be the electric potential (with boundary values $f$) and $J(x,\nabla u(x))=\sigma(x)\nabla u(x) $ would be the current density, as given by Ohm's law. The general case could then be seen (for example, and not exclusively) as describing nonlinear conduction phenomena.

Assuming the equation \eqref{eq-general} can be solved for some class of Dirichlet boundary data, with solutions having suitable regularity, we can define the Dirichlet-to-Neumann map (which may also be called the "potential-to-current map" in the conduction  interpretation of the equation) to be
\begin{equation}
\left.\Lambda_{J} f=\nu\cdot J\left(x,u,\nabla u\right)\right|_{\pr\dom},
\end{equation}
where $u$ is the solution to equation \eqref{eq-general} and $\nu$ is the unit outer pointing normal vector to the boundary $\pr\dom$. From an experimental point of view, $\Lambda_J$ represents all the information that may be obtained from boundary measurements of pairs of Dirichlet and Neumann data (potential and current, if the equation is describing conduction).

The inverse problem proposed by Calder\'on in \cite{Ca} is to invert the mapping $J\to\Lambda_J$. An important subproblem is the question of uniqueness, i.e. the question of the injectivity of the mapping $J\to\Lambda_J$. In the linear case, much work has already been done towards solving these problems, especially in the isotropic case (i.e. when $\sigma$ is a multiple of the identity matrix). For $n\geq 3$, uniqueness was proven in \cite{SU} and a method for reconstructing $\sigma$ from the Dirichlet-to-Neumann map was given in \cite{N}. The anisotropic case is more difficult, as uniqueness does not hold, and the problem is still open when $n\geq 3$. 

In the case of  elliptic semilinear or quasilinear equations, there are a number of uniqueness results that are known. For the semilinear case, see \cite{IN}, \cite{IS}, \cite{S2}, \cite{FO}, \cite{LLLS1}, \cite{LLLS2}, \cite{KU}. For the quasilinear case, not in divergence form, see \cite{I2}. For the quasilinear case in divergence form, when $J(x, u, \nabla u)= c(x,u)\nabla u$ see \cite{S1}, \cite{SuU}, \cite{EPS}; when $J(x,u,\nabla u)= A(u,\nabla u)$, see \cite{MU}, \cite{Sh}; when $J(x,u,\nabla u)=A(x,\nabla u)$ in 2D see \cite{HS}; when $J(x,u,\nabla u)=\sigma(x)\nabla u+b(x)|\nabla u|^2$, see \cite{KN}, \cite{CNV}. In this list, the functions $c$, $\sigma$ are meant to be scalar valued and the functions $A$, $b$ are meant to be vector valued. Note that \cite{CNV} also gives a reconstruction method for the coefficients $\sigma$ and $b$. We can also mention the uniqueness results of \cite{AZ}, \cite{C} for quasilinear time-harmonic Maxwell systems.

In this paper we will consider the following boundary value problem for the complex valued function $u$
\begin{equation}\label{eq}
\left\{\begin{array}{l}\nabla\cdot\left(\sigma(x)\nabla u+a(x)|\nabla u|^{p-2}\nabla u\right)=0,\\[5pt]
u|_{\pr\dom}=f,\end{array}\right.
\end{equation}
where $p\in (1,\infty)\setminus\{2\}$,
\begin{equation}
|\nabla u|^2=\nabla u\cdot \overline{\nabla u},
\end{equation}  
\begin{equation}\label{assumptions-1}
\sigma\in C^\infty(\bar\dom),\quad a \in L^\infty(\dom),
\end{equation}
and for some positive constants $\lambda, m>0$ we have
\begin{equation}\label{assumptions-2}
0<\lambda<\sigma<\lambda^{-1},\quad 0<m<a<m^{-1}.
\end{equation}
 This can be interpreted as a nonlinear conductivity equation, with nonlinear conductivity
\begin{equation}
\gamma(x,\nabla u)= \sigma(x)+a(x)|\nabla u|^{p-2}.
\end{equation}
The associated Dirichlet-to-Neumann map is
\begin{equation}
\left.\Lambda_{\sigma,a,p} f=\left(\sigma(x)+a(x)|\nabla u|^{p-2}\right)\pr_\nu u\right|_{\pr\dom},
\end{equation}
where $u$ is the solution to \eqref{eq}. From here on, $q$ will be the H\"older conjugate of $p$, i.e. $q^{-1}+p^{-1}=1$.

The nonlinear term in equation \eqref{eq} is known as the weighted p-Laplacian. Inverse problems for the $\sigma=0$ case have been investigated in \cite{BKS}, \cite{B}, \cite{BHKS}, \cite{GKS}, \cite{BIK}, \cite{KW}. Except for the two dimensional case, a uniqueness result has not been obtained yet. Part of the interest of this paper lies in showing that the addition of a linear term to the equation renders the problem much more tractable.

We also need to mention the paper \cite{HHM}, where the authors consider the inverse problem for the equation
\begin{equation}
\nabla(\sigma(\tau^2+|\nabla u|^2)^\frac{p-2}{2}\nabla u)=0,
\end{equation}
from a numerics point of view. We can isolate the linear part of their equation by writing it as
\begin{equation}
\nabla(\sigma\nabla u)+
\nabla\left[ \sigma\left((1+\tau^{-2}|\nabla u|^2)^{\frac{p-2}{2}} -1\right)\nabla u \right] =0,
\end{equation}
where the second term is clearly purely nonlinear. Though not identical to the equation we are considering here, this model can be treated in a similar way to what we will present in this paper.

\subsection{Main results and outline}

In order to discuss the inverse problem, we must first give an account of when and in what sense the equation \eqref{eq} can be solved.  Using energy minimization methods it is relatively straightforward to prove the existence of weak solutions. To this end we introduce the spaces of Dirichlet data 
\begin{equation}
\X_p=\left\{\begin{array}{l} W^{1-\frac{1}{p},p}(\pr\dom)\text{ if }p>2,\\[5pt]
W^{\frac{1}{2},2}(\pr\dom)\text{ if }p\in(1,2). \end{array}\right.
\end{equation}
We will denote by $\X_p'$ the dual of the space $\X_p$. We will show in section \ref{forward} that the Dirichlet-to-Neumann map $\Lambda_{\sigma,a,p}$ maps $\X_p$ into $\X_p'$. 

For $p>2$ and Dirichlet boundary data  $f\in W^{2-\frac{1}{p},p}(\pr\dom)$ with sufficiently small norm, we will show the existence of strong $W^{2,p}(\dom)$ solutions of \eqref{eq}. This will be done via a contraction principle argument. In the $p\in(1,2)$ case, we do not need to use strong solutions in our treatment of the inverse problem, so we forgo investigating this topic here.

Our main result will be

\begin{thm}\label{thm1}
If the assumptions \eqref{assumptions-1}, \eqref{assumptions-2} hold, then the parameters $\sigma$, $a$, and $p$ can be reconstructed from the Dirichlet-to-Neumann map $\Lambda_{\sigma,a,p}$.
\end{thm}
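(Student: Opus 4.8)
The plan is to exploit the different homogeneity of the two terms in the current $\sigma\nabla u+a|\nabla u|^{p-2}\nabla u$ under a rescaling of the boundary data, which lets us peel the linear contribution off from the nonlinear one. Fix $f,g\in\X_p$, let $u_t$ be the solution of \eqref{eq} with data $tf$ (for $t>0$ small, furnished by the forward theory), and let $v,W$ solve the linear problem $\nabla\cdot(\sigma\nabla\,\cdot\,)=0$ with boundary data $f,g$ respectively. Writing $\langle\Lambda_{\sigma,a,p}(tf),g\rangle=\int_\dom(\sigma\nabla u_t+a|\nabla u_t|^{p-2}\nabla u_t)\cdot\nabla W\dd x$ from the weak formulation (using $\nabla\cdot J=0$ and that $W$ extends $g$), and setting $u_t=tv+w_t$ with $w_t|_{\pr\dom}=0$, the identity $\nabla\cdot(\sigma\nabla W)=0$ forces $\int_\dom\sigma\nabla w_t\cdot\nabla W\dd x=0$. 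Hence the $\sigma$-part of the pairing equals exactly $t\langle\Lambda_\sigma f,g\rangle$, where $\Lambda_\sigma$ is the linear Dirichlet-to-Neumann map, and the term of order $t$ reproduces $\Lambda_\sigma$. By the classical isotropic reconstruction of \cite{SU}, \cite{N}, this recovers $\sigma$.

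Next I would isolate the leading nonlinear correction. Substituting $u_t\approx tv$ into the nonlinear term gives $a|\nabla u_t|^{p-2}\nabla u_t\approx t^{p-1}a|\nabla v|^{p-2}\nabla v$, and the computation above leaves
\begin{equation}
\lim_{t\to 0^+}\frac{\langle\Lambda_{\sigma,a,p}(tf)-t\Lambda_\sigma f,\,g\rangle}{t^{p-1}}=\int_\dom a\,|\nabla v|^{p-2}\nabla v\cdot\nabla W\dd x .
\end{equation}
The exponent $p-1$ governing this first correction can be read off from the $t$-dependence of the measured quantity for any fixed pair $f,g$ making the right-hand side nonzero, which determines $p$; the same limit then supplies the value of the integral for every pair of linear solutions $v,W$. (For $p\in(1,2)$ the term $t^{p-1}$ dominates $t$ as $t\to 0$, so the linear piece is extracted as the subleading one, or equivalently via the exact $(p-1)$-homogeneity of the purely nonlinear operator; the resulting integral identity is unchanged.)

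The final and most delicate step is to recover $a$ from the knowledge of
\begin{equation}
\mathcal{A}(v,W)=\int_\dom a\,|\nabla v|^{p-2}\nabla v\cdot\nabla W\dd x
\end{equation}
for all pairs of solutions of the now known linear operator $\nabla\cdot(\sigma\nabla\,\cdot\,)$. The advantage over the pure weighted $p$-Laplacian is precisely that $v$ and $W$ range over the solution space of a \emph{linear} equation, so the Runge approximation property for $\nabla\cdot(\sigma\nabla\,\cdot\,)$ is available. I would use it to approximate, on a small ball $B_r(x_0)\subset\subset\dom$, local solutions whose gradients are prescribed constant vectors $\xi,\eta$ at $x_0$, concentrate the solutions so as to localize $\mathcal{A}$ near $x_0$, and let $r\to 0$. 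The Lebesgue differentiation theorem then yields $a(x_0)\,|\xi|^{p-2}(\xi\cdot\eta)$, and varying the directions $\xi,\eta$ isolates $a(x_0)$; equivalently, one tests the identity against a difference $\delta a=a_1-a_2$ of two candidate coefficients and shows that $\mathcal{A}\equiv 0$ forces $\delta a\equiv 0$.

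I expect the main obstacle to be exactly this localization in the presence of the nonlinear weight $|\nabla v|^{p-2}$: unlike the linear conductivity problem, one cannot polarize $\mathcal{A}$ into a bilinear pairing of two independent families of solutions, because it depends nonlinearly on $v$. Controlling the magnitude and direction of $\nabla v$ simultaneously over the localization region while keeping the exterior contribution negligible is where the complex-valued solutions and sharp Runge/CGO estimates for $\nabla\cdot(\sigma\nabla\,\cdot\,)$ must be used most carefully. This, together with the regularity needed to justify the expansion $u_t=tv+w_t$ with $w_t$ small enough (in a norm strong enough to pass through the map $F\mapsto|F|^{p-2}F$) to legitimize the asymptotics above, constitutes the technical core of the argument.
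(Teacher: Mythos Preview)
Your linearization step (extracting $\Lambda_\sigma$ as the order-$t$ coefficient and then the nonlinear integral $\mathcal{A}(v,W)=\int_\dom a|\nabla v|^{p-2}\nabla v\cdot\nabla W$ as the order-$t^{p-1}$ coefficient) is exactly the paper's strategy for $p>2$, and the recovery of $\sigma$ and $p$ proceeds as you say.

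Two points diverge from the paper, and one of them is a genuine gap.

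\emph{The case $p\in(1,2)$.} Your parenthetical fix is not enough. With small data $tf$ and $p<2$, the rescaled function $t^{-1}u_t$ solves an equation in which the $p$-Laplacian term carries the diverging factor $t^{p-2}$, so $u_t$ is \emph{not} close to $tv$ and the Ansatz $u_t=tv+w_t$ collapses; you cannot first peel off the linear DN map as a subleading term, because the expansion is around the wrong limiting equation. The paper resolves this by taking \emph{large} data $\epsilon^{-1}f$; then the linear term dominates and one obtains the same functional $I$ with error $\mathscr{O}(\epsilon^{3-2p})$.

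\emph{Recovery of $a$.} Here you explicitly assert that ``one cannot polarize $\mathcal{A}$ into a bilinear pairing of two independent families of solutions,'' and this is precisely the idea you are missing. The paper \emph{does} polarize: with $u_1$ a fixed \emph{real} solution of the linear equation having nonvanishing gradient, one differentiates $I(u_1+zu_2,u_3)$ and $I(u_1+z\bar u_2,u_3)$ in the Wirtinger variables $z,\bar z$ at $z=0$ and subtracts, obtaining
\[
K(u_1,u_2,u_3)=\int_\dom a\,|\nabla u_1|^{p-2}\,\nabla u_2\cdot\nabla u_3,
\]
which is genuinely bilinear in the \emph{two} free solutions $u_2,u_3$. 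One then plugs in CGO solutions for $u_2,u_3$ and recovers $\beta_{u_1}=a|\nabla u_1|^{p-2}$ (hence $a$) by inverting a Schr\"odinger-type operator, following the method of \cite{CNV}. Your alternative route via Runge approximation and Lebesgue-point localization is, as you yourself flag, exactly where the difficulty lies: solutions of $\nabla\cdot(\sigma\nabla\,\cdot)=0$ cannot be made to concentrate on a small ball while remaining small elsewhere, and Runge density on a subdomain gives no smallness control on the complement. Without a concrete concentration mechanism the argument does not close; the paper's polarization trick is what replaces it.
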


As in many other works on inverse boundary value problems for semilinear and quasilinear elliptic equations, we will use a version of a so called ``second linearization'' trick originally used in \cite{I1}. In our case, when $p>2$, this amounts to considering solutions $u_\epsilon$ of \eqref{eq} with Dirichlet data $\epsilon f$, where $\epsilon$ is a small parameter. One can show that these solutions have an asymptotic expansion of the form
\begin{equation}
u_\epsilon=\epsilon u_0+\epsilon^{p-1}u_1+\mathscr{O}(\epsilon^{2p-3}),
\end{equation}
as $\epsilon\to0$.
Then the Dirichlet-to-Neumann map will also have a similar expansion. For $w$ a solution to $\nabla\cdot(\sigma\nabla w)=0$ we have
\begin{equation}
\la\Lambda_{\sigma,a,p}(\epsilon f),w|_{\pr\dom}\ra=\epsilon\la\Lambda_\sigma(f),w\ra+\epsilon^{p-1}I(u_0,\overline{w})+\mathscr{O}(\epsilon^{2p-3}),
\end{equation}
where $\Lambda_\sigma$ is the Dirichlet-to-Neumann map for the linear problem, $u_0$ is the solution of the linear equation with Dirichlet data $f$, and $I(u_0,\overline{w})$ is a map that is homogeneous of order $p-1$ in $u_0$. Both $\Lambda_\sigma$ and $I$ are determined by the full Dirichlet-to-Neumann map $\Lambda_{\sigma,a,p}$. The coefficient $\sigma$ can then be reconstructed from $\Lambda_\sigma$ using the method of \cite{N}. 

It is clear that $p$ is also determined by $\Lambda_{\sigma,a,p}$ since it appears in the exponent of $\epsilon$ in the second term of the expansion.

In past work for semilinear or quasilinear equations with polynomial type nonlinearities (e.g. \cite{KN}, \cite{AZ}, \cite{C}, \cite{CNV}, \cite{FO}, \cite{LLLS1}, \cite{LLLS2}) a polarization trick has been used on the equivalent of our term $I$, in order to obtain an integral identity involving multiple independently chosen solutions of the linear equation. Since here $p$ is not necessarily an integer, we use a trick inspired by polarization to obtain our final integral identity. We then use a method developed in \cite{CNV} to obtain a reconstruction of the coefficient $a$. All this is covered in section \ref{reconstruction-1}.

When $p\in(1,2)$, the above method needs to be modified since in the limit $\epsilon\to0$ the nonlinear term (which is now sublinear) would be the more significant one, so we would not be able to derive asymptotic expansions in the same way. Instead, we take Dirichlet data of the form $\epsilon^{-1}f$ and eventually obtain the expansion
\begin{equation}
\la\Lambda_{\sigma,a,p}(\epsilon^{-1} f),w|_{\pr\dom}\ra
=\epsilon^{-1}\la\Lambda_\sigma,w|_{\pr\dom}\ra+\epsilon^{1-p}I(u_0,\overline{w})+\mathscr{O}(\epsilon^{3-2p}).
\end{equation}
From here the method proceeds identically. We cover this case in section \ref{reconstruction-2}

Some vector identities that we need to use throughout the paper are collected in appendix \ref{appendix-A}.

\section{The forward problem}\label{forward}

In this section we will derive existence results and estimates for solutions of equation \eqref{eq} with Dirichlet data in $\X_p$. Since we are principally concerned with the inverse problem, we do not attempt to obtain a comprehensive existence and regularity theory for this equation. Instead, we only derive results that are sufficient for our subsequent arguments. In particular, we will prove the existence of weak solutions for all $p\in(1,\infty)$. However, we will only derive the existence of strong solutions in the $p>2$ case, and only for sufficiently small source and boundary data.

\subsection{Weak solutions}

\begin{prop}\label{weak-existence}
If $p\in(1,\infty)$, $f\in {\X_p}  $, and $F\in L^q(\dom)$, then the boundary value problem 
\begin{equation}\label{eq-F}
\left\{\begin{array}{l}\nabla\cdot\left(\sigma(x)\nabla u+ a(x)|\nabla u|^{p-2}\nabla u\right)=\nabla F,\\[5pt]
u|_{\pr\dom}=f,\end{array}\right.
\end{equation}
  has a unique solution $u\in W^{1,2}(\dom)\cap W^{1,p}(\dom)$. Furthermore, there exists $C>0$, independent of $f$, $F$ such that
\begin{equation}
||u||_{W^{1,p}(\dom)}\leq C\left(||f||_{\X_p}+||F||_{L^q(\dom)}^{\frac{1}{p-1}}\right),\text{ if }p>2,
\end{equation} 
\begin{equation}
||u||_{W^{1,2}(\dom)}\leq C\left(||f||_{\X_p}+||F||_{L^q(\dom)}\right),\text{ if }p\in(1,2).
\end{equation} 
\end{prop}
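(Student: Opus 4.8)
The plan is to realize $u$ as the unique minimizer of a strictly convex energy functional via the direct method in the calculus of variations, and then to extract the quantitative bounds by testing the resulting Euler--Lagrange equation against the solution itself. First I would reduce to homogeneous boundary data: by the standard trace/extension theorem choose an extension $f_0$ of $f$, lying in $W^{1,p}(\dom)$ when $p>2$ and in $W^{1,2}(\dom)$ when $p\in(1,2)$, with $\|f_0\|\le C\|f\|_{\X_p}$, and seek $u=f_0+v$ with $v$ in the reflexive Banach space
\[
V=W^{1,2}_0(\dom)\cap W^{1,p}_0(\dom),\qquad \|v\|_V=\|\nabla v\|_{L^2(\dom)}+\|\nabla v\|_{L^p(\dom)}.
\]
On the bounded domain $\dom$ one has $V=W^{1,p}_0(\dom)$ when $p>2$ and $V=W^{1,2}_0(\dom)$ when $p\in(1,2)$, so the governing exponent is the larger of $2$ and $p$, which is precisely what distinguishes the two regimes in the statement. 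Reading the right-hand side in the weak (divergence) sense, so that the weak formulation is $\int_\dom(\sigma\nabla u+a|\nabla u|^{p-2}\nabla u)\cdot\nabla\phi=\int_\dom F\cdot\nabla\phi$ for all $\phi\in V$, I define
\[
E(v)=\int_\dom\Big(\tfrac12\sigma|\nabla(f_0+v)|^2+\tfrac1p a|\nabla(f_0+v)|^p-F\cdot\nabla(f_0+v)\Big)\dd x.
\]

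The integrand $\xi\mapsto\tfrac12\sigma|\xi|^2+\tfrac1p a|\xi|^p-F\cdot\xi$ is convex in $\xi$, and strictly so because of the nondegenerate quadratic term (recall $\sigma>0$); hence $E$ is strictly convex, and by convexity of the integrand in the gradient variable it is weakly lower semicontinuous on $V$ (the source term being a weakly continuous linear functional). For coercivity I would use \eqref{assumptions-2} to bound $E(v)\ge\tfrac\lambda2\|\nabla(f_0+v)\|_{L^2(\dom)}^2+\tfrac mp\|\nabla(f_0+v)\|_{L^p(\dom)}^p-\|F\|_{L^q(\dom)}\|\nabla(f_0+v)\|_{L^p(\dom)}$, then absorb the source term by H\"older's and Young's inequalities into the dominant elliptic term: the $L^p$ term when $p>2$, and the $L^2$ term when $p\in(1,2)$ (using $\|\cdot\|_{L^p}\le C\|\cdot\|_{L^2}$ on the bounded domain in the latter case), treating $\nabla f_0$ as fixed data. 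This gives $E(v)\to+\infty$ as $\|v\|_V\to\infty$. The direct method then yields a minimizer: a minimizing sequence is bounded by coercivity, has a weakly convergent subsequence by reflexivity, and its limit minimizes $E$ by weak lower semicontinuity, while strict convexity makes the minimizer unique. Its Euler--Lagrange equation is exactly the weak formulation, so $u=f_0+v$ is the desired weak solution. Uniqueness among all weak solutions follows independently from strict monotonicity: subtracting two solutions and testing with their difference, the linear part contributes at least $\lambda\|\nabla(u_1-u_2)\|_{L^2(\dom)}^2$ and the $p$-Laplacian part is nonnegative by the standard monotonicity inequality for $\xi\mapsto|\xi|^{p-2}\xi$, forcing $u_1=u_2$.

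For the quantitative bounds I would test the weak form with $\phi=u-f_0\in V$, obtaining
\[
\int_\dom(\sigma|\nabla u|^2+a|\nabla u|^p)\dd x
=\int_\dom(\sigma\nabla u+a|\nabla u|^{p-2}\nabla u)\cdot\nabla f_0\dd x
+\int_\dom F\cdot\nabla(u-f_0)\dd x.
\]
Estimating the right-hand side by H\"older and absorbing with Young's inequality produces the two regimes. When $p>2$, the term $\|F\|_{L^q(\dom)}\|\nabla u\|_{L^p(\dom)}$ is absorbed into $\tfrac m2\|\nabla u\|_{L^p(\dom)}^p$ at the cost of $C\|F\|_{L^q(\dom)}^{q}$; since $q/p=1/(p-1)$ this yields the factor $\|F\|_{L^q(\dom)}^{1/(p-1)}$ and a bound on $\|\nabla u\|_{L^p(\dom)}$, upgraded to the full $W^{1,p}$ norm by Poincar\'e's inequality. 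When $p\in(1,2)$, the dominant term is $\lambda\|\nabla u\|_{L^2(\dom)}^2$, and $\|F\|_{L^q(\dom)}\|\nabla u\|_{L^p(\dom)}\le C\|F\|_{L^q(\dom)}\|\nabla u\|_{L^2(\dom)}$ is absorbed quadratically, giving linear dependence on $\|F\|_{L^q(\dom)}$ and a bound on $\|u\|_{W^{1,2}(\dom)}$. In both cases the contributions involving $\nabla f_0$ are controlled by $\|f\|_{\X_p}$ through the extension estimate and Young's inequality.

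The main obstacle I anticipate is the bookkeeping forced by the two competing growth exponents: the correct energy space, the dominant term in both the coercivity estimate and the testing identity, and hence the precise form of the final bound all switch at $p=2$, and in each regime one must invoke the embedding between $L^2(\dom)$ and $L^p(\dom)$ in the correct direction and apply Young's inequality with the matching pair of conjugate exponents to reproduce the stated powers of $\|F\|_{L^q(\dom)}$. Everything else---strict convexity, weak lower semicontinuity, the direct method, and the monotonicity argument for uniqueness---is standard once the variational setup and the weak (divergence-form) interpretation of the source term are fixed.
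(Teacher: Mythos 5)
Your existence and uniqueness argument is, in substance, the paper's: both set up the convex energy $\int_\dom \tfrac12\sigma|\nabla v|^2+\tfrac1p a|\nabla v|^p+\re \nabla\bar v\cdot F$ on the reflexive space $W^{1,2}(\dom)\cap W^{1,p}(\dom)$, run the direct method (coercivity, weak lower semicontinuity from convexity of the integrand, reflexivity), and identify weak solutions with critical points; the paper gets uniqueness from strict convexity of the energy rather than from the monotonicity of $\xi\mapsto|\xi|^{p-2}\xi$, but these are interchangeable.

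The a priori estimates are where you diverge, and where there is a concrete gap. Testing the weak formulation with $u-f_0$ for a \emph{generic} extension $f_0$ of $f$ leaves the cross term $\re\int_\dom\sigma\nabla u\cdot\nabla\bar f_0$, which you can only absorb into $\lambda\|\nabla u\|_{L^2(\dom)}^2$ at the price of $C\|\nabla f_0\|_{L^2(\dom)}^2$ on the right. For $p>2$ this yields $\|\nabla u\|_{L^p(\dom)}^p\leq C\bigl(\|f\|_{\X_p}^2+\|f\|_{\X_p}^p+\|F\|_{L^q(\dom)}^q\bigr)$, i.e.\ an extra term $\|f\|_{\X_p}^{2/p}$ in the final bound, and since $2/p<1$ this is \emph{not} dominated by $C\|f\|_{\X_p}$ when $\|f\|_{\X_p}$ is small, so the stated estimate does not follow; a symmetric defect (exponent $p/2$) appears for $p\in(1,2)$. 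The paper sidesteps this with an energy-comparison argument: writing $E=E_2+E_p$ and letting $v_2$ solve the \emph{linear} problem $\nabla\cdot(\sigma\nabla v_2)=0$ with the same boundary data, it uses that $v_2$ minimizes $E_2$ together with $E[u]\leq E[v_2]$ to conclude $E_p[u]\leq E_p[v_2]$, so the quadratic parts cancel exactly and only degree-$p$ quantities survive. You can repair your scheme in the same spirit by taking $f_0$ to be precisely this $\sigma$-harmonic extension: then $\int_\dom\sigma|\nabla u|^2-\re\int_\dom\sigma\nabla u\cdot\nabla\bar f_0=\int_\dom\sigma|\nabla(u-f_0)|^2\geq0$, the offending cross term disappears, and Young's inequality applied to the remaining terms gives exactly the claimed bounds.
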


\begin{proof}

Let $\s=W^{1,2}(\dom)\cap W^{1,p}(\dom)$ be equipped with the norm
\begin{equation}
||u||_\s = \max (||u||_{W^{1,2}(\dom)},||u||_{W^{1,p}(\dom)}).
\end{equation}
With this norm, $\s$ is a Banach space. Its dual is $\s'=W^{1,p}(\dom)'+W^{1,2}(\dom)'$, with the norm
\begin{multline}
||l||_{\s'}
=\inf\{||l_1||_{(W^{1,2}(\dom))'}+||l_2||_{(W^{1,p}(\dom))'}:l=l_1+l_2,\\[5pt]
l_1\in W^{1,2}(\dom)', l_2\in W^{1,p}(\dom)'\}.
\end{multline}
The space $\s$ is reflexive. (For example, see \cite{BS}.)

Define the energy
\begin{equation}
E[v]=\int_\dom \frac{1}{2}\sigma(x)|\nabla v|^2+\frac{1}{p}a(x)|\nabla v|^p+\re\nabla\bar v\cdot F, \quad v\in \s.
\end{equation}
Note that
\begin{multline}
E[v]\geq\int_\dom\left(\frac{\lambda}{2}|\nabla v|^2+\frac{m }{2p}|\nabla v|^p\right)-C||F||_{L^q(\dom)}^q\\[5pt]
\geq C_1\left[\max(||\nabla v||_{L^2(\dom)}^2,||\nabla v||_{L^p(\dom)}^p)-||F||_{L^q(\dom)}^q   \right],
\end{multline}
and, similarly
\begin{equation}
E[v]\leq C_2 \left[\max(||\nabla v||_{L^2(\dom)}^2,||\nabla v||_{L^p(\dom)}^p)+||F||_{L^q(\dom)}^q   \right].
\end{equation}
Here $C_1, C_2>0$ are constants that depend on $\lambda$, $m$, $p$,  and $|\dom|$.

Let $u_k\in\s$ be a minimizing sequence such that $u_k|_{\pr\dom}=f$ and
\begin{equation}
\lim_{k\to\infty} E[u_k]=\min_{v\in \s,v|_{\pr\dom}=f} E[v].
\end{equation}
Since $u_k$ is bounded in $\s$, up to extracting a subsequence, it converges weakly to a limit $u\in\s$. By the convexity in $\nabla v$ of the integrand in the definition of $E[v]$, we have that $E$ is weakly lower semicontinuous (see \cite[Section 8.2]{E}). It is then clear that $E[u]=\min_{v\in\s ,v|_{\pr\dom}=f} E[v]$ and (by the  convexity of $E$) it is the unique minimum. 

Let $\phi\in C_0^\infty(\dom)$ and, for $z\in\C$ we should have
\begin{equation}
\pr_{\bar z} E[u+z\phi]|_{z=0}=\pr_{z} E[u+z\phi]|_{z=0}=0.
\end{equation} 
In particular, since
\begin{equation}
\pr_{\bar z}|\nabla(u+z\phi)|^2|_{z=0}=\nabla\bar\phi\cdot\nabla u,
\end{equation}
\begin{equation}
\pr_{\bar z}|\nabla (u+z\phi)|^p|_{z=0}=\pr_{\bar z} \left(|\nabla(u+z\phi)|^2\right)^{p/2}|_{z=0}=\frac{p}{2}|\nabla u|^{p-2}\nabla u\cdot\nabla\bar\phi,
\end{equation}
it follows that
\begin{equation}
\int_\dom \nabla \bar\phi\cdot\left(\sigma(x)\nabla u+  a(x)|\nabla u|^{p-2}\nabla u+F\right)=0.
\end{equation}
Therefore, $u$ is a weak solution to equation \eqref{eq-F}.

Conversely, if $u$ is a weak solution to equation \eqref{eq-F}, then for any $w\in W^{1,2}_0(\dom)\cap W_0^{1,p}(\dom)$ we have that
\begin{equation}
\int_\dom \nabla \bar w\cdot \left(\sigma(x)\nabla u+  a(x)|\nabla u|^{p-2}\nabla u+F\right)=0.
\end{equation}
This, and its complex conjugate, give that
\begin{equation}
\pr_{\bar z} E[u+zw]|_{z=0}=\pr_{z} E[u+zw]|_{z=0}=0,
\end{equation} 
so, by the uniqueness of the energy minimum, we conclude the uniqueness of the solutions to \eqref{eq}.

Suppose $p>2$. We define the functionals
\begin{equation}
E_2[v]=\int_\dom\frac{1}{2}\sigma(x)|\nabla v|^2,\text{ and } E_p[v]=\int_\dom\frac{1}{p}a(x)|\nabla v|^p+\re\nabla\bar v\cdot F.
\end{equation}
Consider now the function $v_2\in W^{1,p}(\dom)$ which is the solution of the boundary value problem
\begin{equation}
\left\{\begin{array}{l}\nabla\left(\sigma(x)\nabla v_2\right)=0,\\[5pt]
v_2|_{\pr\dom}=f.\end{array}\right.
\end{equation}
We  know that
\begin{equation}
||v_2||_{W^{1,p}(\dom)}\leq C||f||_{\X_p}
\end{equation}
and that it minimizes the energy $E_2$.
 Indeed, regarding the last claim, suppose $w\in W^{1,p}_0(\dom)$. Then
\begin{equation}
E_2[v_2+w]=E[v_2]+\re\int_\dom \nabla\bar w\cdot\left(\sigma\nabla v_2\right)+\int_\dom\frac{1}{2}\sigma|\nabla w|^2\geq E[v_2],
\end{equation}
as the middle term is zero and the last term is positive.

Since $E[v]=E_2[v]+E_p[v]$, it is clear that
\begin{equation}
E[u]\leq E_2[v_2]+E_p[v_2].
\end{equation}
Then
\begin{equation}
E_p[v_2]\geq E[u]-E_2[v_2]=E_2[u]-E_2[v_2]+E_p[u]\geq E_p[u],
\end{equation}
so 
\begin{equation}
E_p[u]\leq E_{p}[v_2].
\end{equation}
From this it follows that
\begin{multline}
||\nabla u||_{L^p(\dom)}^p\leq C\left(||\nabla v_2||_{L^p(\dom)}^p+ \left|\int_\dom\nabla\bar v\cdot F\right|+ \left|\int_\dom\nabla\bar v_2\cdot F\right|\right)\\[5pt]
\leq C\left(||\nabla v_2||_{L^p(\dom)}^p+ ||F||_{L^q(\dom)}^q\right)+\frac{1}{2}||\nabla u||_{L^p(\dom)}^p,
\end{multline}
so
\begin{equation}
||\nabla u||_{L^p(\dom)}\leq C\left(||f||_{\X_p}+ ||F||_{L^q(\dom)}^{\frac{1}{p-1}}\right).
\end{equation}

Suppose now that $p\in(1,2)$. Let
\begin{equation}
\tilde E_2[v]=\int_\dom\frac{1}{2}\sigma(x)|\nabla v|^2+\re\nabla\bar v\cdot F,\text{ and } \tilde E_p[v]=\int_\dom\frac{1}{p}a(x)|\nabla v|^p.
\end{equation}
Suppose 
\begin{equation}
\left\{\begin{array}{l}\nabla\left(\sigma(x)\nabla \tilde v_2\right)=\nabla F,\\[5pt]
\tilde v_2|_{\pr\dom}=f.\end{array}\right.
\end{equation}
Then
\begin{equation}
||\tilde v_2||_{W^{1,2}(\dom)}\leq C(||f||_{\X_p}+||F||_{L^q(\dom)}),
\end{equation}
and $\tilde v_2$ minimizes the energy $\tilde E_2$.

It still holds that $\tilde E_p[u]\leq \tilde E_p[\tilde v_2]$, so 
\begin{equation}
||\nabla u||_{L^p(\dom)}\leq C||\nabla v_2||_{L^p(\dom)}\leq C||\nabla v_2||_{L^2(\dom)},
\end{equation}
and the same conclusion follows.
\end{proof}

\begin{cor}
The DN map $\Lambda_{\sigma,a,p}:{\X_p}  \to {\X_p}  '$ and
\begin{equation}
||\Lambda_{\sigma,a,p}f||_{{\X_p}  '}\leq C \left(||f||_{\X_p}+||f||_{\X_p}^{p-1}\right).
\end{equation}
\end{cor}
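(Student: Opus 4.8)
The plan is to interpret $\Lambda_{\sigma,a,p}f$ weakly, through Green's identity, and then bound the resulting pairing by Hölder's inequality together with the a priori estimate from Proposition \ref{weak-existence} (applied with $F=0$). Concretely, let $u\in\s$ be the solution of \eqref{eq} with Dirichlet data $f$. For any $g\in\X_p$ I would choose an extension $w\in\s$ with $w|_{\pr\dom}=g$, and set
\begin{equation}
\la\Lambda_{\sigma,a,p}f,g\ra=\int_\dom\nabla\bar w\cdot\left(\sigma\nabla u+a|\nabla u|^{p-2}\nabla u\right).
\end{equation}
The first thing to check is that this is independent of the chosen extension: if $w_1,w_2$ are two extensions of $g$, then $w_1-w_2\in W_0^{1,2}(\dom)\cap W_0^{1,p}(\dom)$, so the weak formulation established in the proof of Proposition \ref{weak-existence} (with $F=0$) forces the two corresponding integrals to agree. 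This both legitimizes the definition and shows it coincides with the boundary object $(\sigma+a|\nabla u|^{p-2})\pr_\nu u$ whenever $u$ is regular enough to integrate by parts.

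Second, I would choose the extension efficiently and estimate. For $p>2$, given $g\in\X_p=W^{1-\frac{1}{p},p}(\pr\dom)$ the trace theorem provides $w\in W^{1,p}(\dom)$ with $||w||_{W^{1,p}(\dom)}\leq C||g||_{\X_p}$; since $p>2$ on a bounded domain we have the embedding $W^{1,p}(\dom)\hookrightarrow W^{1,2}(\dom)$, so indeed $w\in\s$. Splitting the pairing into its linear and nonlinear parts, the ellipticity bounds \eqref{assumptions-2} and Hölder give
\begin{equation}
\left|\int_\dom\sigma\nabla\bar w\cdot\nabla u\right|\leq\lambda^{-1}||\nabla w||_{L^2(\dom)}||\nabla u||_{L^2(\dom)},
\end{equation}
\begin{equation}
\left|\int_\dom a|\nabla u|^{p-2}\nabla u\cdot\nabla\bar w\right|\leq m^{-1}||\nabla u||_{L^p(\dom)}^{p-1}||\nabla w||_{L^p(\dom)},
\end{equation}
where in the last line I used the conjugate exponents $\left(\frac{p}{p-1},p\right)$. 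Since $L^p(\dom)\hookrightarrow L^2(\dom)$ controls $||\nabla u||_{L^2(\dom)}$ and $||\nabla w||_{L^2(\dom)}$ by the corresponding $L^p$ norms, and Proposition \ref{weak-existence} (with $F=0$) gives $||\nabla u||_{L^p(\dom)}\leq C||f||_{\X_p}$, both bounds combine to
\begin{equation}
|\la\Lambda_{\sigma,a,p}f,g\ra|\leq C\left(||f||_{\X_p}+||f||_{\X_p}^{p-1}\right)||g||_{\X_p},
\end{equation}
which is the asserted estimate after taking the supremum over $g$ with $||g||_{\X_p}\leq1$.

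Third, the case $p\in(1,2)$ is handled symmetrically, with the roles of the two exponents reversed. Now $\X_p=W^{\frac{1}{2},2}(\pr\dom)$, so I would extend $g$ to $w\in W^{1,2}(\dom)$ with $||w||_{W^{1,2}(\dom)}\leq C||g||_{\X_p}$, and use the embedding $W^{1,2}(\dom)\hookrightarrow W^{1,p}(\dom)$ (valid since now $p<2$) both to place $w\in\s$ and to dominate the $L^p$ norms of $\nabla u$ and $\nabla w$ by their $L^2$ norms. The linear term is bounded directly in $L^2$, the nonlinear term again by Hölder, and the relevant a priori bound is the second estimate of Proposition \ref{weak-existence}, namely $||u||_{W^{1,2}(\dom)}\leq C||f||_{\X_p}$. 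The same combination yields the stated inequality. I do not expect a genuine obstacle here: the only points requiring care are the well-definedness of the weak pairing (which rests entirely on the weak formulation already derived) and keeping track of which Sobolev embedding between the $p$- and $2$-scales is available in each regime, so that the single a priori estimate from Proposition \ref{weak-existence} feeds correctly into Hölder's inequality.
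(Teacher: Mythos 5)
Your argument is correct and is essentially the paper's own proof: both define the pairing weakly via an extension of the boundary datum, split into linear and nonlinear parts, apply H\"older with the ellipticity bounds \eqref{assumptions-2} and the embeddings between the $p$- and $2$-scales, and then invoke the a priori estimate of Proposition \ref{weak-existence} with $F=0$. The only difference is that you make explicit the independence of the pairing from the choice of extension, which the paper leaves implicit.
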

\begin{proof}
{\bf case i)} $p>2$. Let $w\in W^{1,p}(\dom)$. Then
\begin{multline}
\left|\la\Lambda_{\sigma,a,p}f,w|_{\pr\dom}\ra\right|=\left|\int_\dom(\sigma(x)+a(x)|\nabla u|^{p-2})\nabla u\cdot\nabla\bar w\right|\\[5pt]
\leq\lambda^{-1}||\nabla u||_{L^2(\dom)}||\nabla w||_{L^2(\dom)}+m^{-1}||\nabla u||_{L^p(\dom)}^{p-1}||\nabla w||_{L^p(\dom)}\\[5pt]
\leq C (||u||_{W^{1,p}(\dom)}+||u||_{W^{1,p}(\dom)}^{p-1})||w||_{W^{1,p}(\dom)}.
\end{multline}
{\bf case ii)} $p\in (1,2)$. Let $w\in W^{1,2}(\dom)$. Then, similarly to the above estimate,
\begin{equation}
\left|\la\Lambda_{\sigma,a,p}f,w|_{\pr\dom}\ra\right|
\leq C(||u||_{W^{1,2}(\dom)}+||u||_{W^{1,2}(\dom)}^{p-1})||w||_{W^{1,2}(\dom)}.
\end{equation}
\end{proof}

\subsection{Strong solutions}

Next we will consider strong solutions to our equation, in the $p>2$ case.
\begin{thm}\label{thm-strong}
Suppose $r>n$, $p>2$. Let $F\in L^r(\dom)$ and $f\in W^{2-\frac{1}{r},r}(\pr\dom)$. There exists $\delta>0$ so that if 
\begin{equation}
||F||_{L^r(\dom)}, ||f||_{W^{2-\frac{1}{r},r}(\pr\dom)}<\delta,
\end{equation}
then equation 
\begin{equation}\label{eq-F-s}
\left\{\begin{array}{l}\nabla\cdot\left(\sigma(x)\nabla u+ a(x)|\nabla u|^{p-2}\nabla u\right)=\nabla F,\\[5pt]
u|_{\pr\dom}=f,\end{array}\right.
\end{equation}
 has a unique solution $u\in W^{2,r}(\dom)$. Furthermore, there exists $C>0$ such that
\begin{equation}
||u||_{W^{2,r}(\dom)}\leq C\left(||f||_{W^{2-\frac{1}{r},r}(\pr\dom)}+||F||_{L^r(\dom)}\right).
\end{equation}
\end{thm}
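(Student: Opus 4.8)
The plan is to solve \eqref{eq-F-s} by a Banach fixed point argument, treating the quasilinear $p$-Laplacian flux as a small perturbation of the linear principal part $L_\sigma u:=\nabla\cdot(\sigma\nabla u)$. First I would invoke the standard $L^r$ elliptic theory for $L_\sigma$: since $\sigma\in C^\infty(\bar\dom)$ is uniformly elliptic and $\pr\dom$ is smooth, for every $h\in L^r(\dom)$ and $f\in W^{2-\frac1r,r}(\pr\dom)$ the linear problem $L_\sigma u=h$, $u|_{\pr\dom}=f$ has a unique solution $u\in W^{2,r}(\dom)$ with $\|u\|_{W^{2,r}(\dom)}\le C(\|h\|_{L^r(\dom)}+\|f\|_{W^{2-\frac1r,r}(\pr\dom)})$. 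It is convenient to reduce to homogeneous Dirichlet data by subtracting the $\sigma$-harmonic extension of $f$, which lies in $W^{2,r}(\dom)$ with norm controlled by $\|f\|_{W^{2-\frac1r,r}(\pr\dom)}$.

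Next I would define a fixed point map $\Phi$ on a small closed ball $B_\rho=\{v\in W^{2,r}(\dom):\|v\|_{W^{2,r}(\dom)}\le\rho\}$ (intersected with the affine set carrying the correct boundary values) by freezing the nonlinear flux at $v$:
\[
L_\sigma(\Phi(v))=\nabla\cdot\big(F-a|\nabla v|^{p-2}\nabla v\big),\qquad \Phi(v)|_{\pr\dom}=f.
\]
The crucial analytic input is the mapping behaviour of the Nemytskii-type operator $N(v):=a|\nabla v|^{p-2}\nabla v$. Because $p>2$, the map $\xi\mapsto|\xi|^{p-2}\xi$ is $C^1$ with derivative of size $O(|\xi|^{p-2})$ vanishing at the origin, and because $r>n$ we have $W^{1,r}(\dom)\hookrightarrow C^0(\bar\dom)$, so $\nabla v\in L^\infty(\dom)$. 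Using these two facts with the chain and product rules I would establish a bound of the form $\|N(v)\|_{W^{1,r}(\dom)}\le C\|v\|_{W^{2,r}(\dom)}^{p-1}$ together with the Lipschitz estimate $\|N(v)-N(\tilde v)\|_{W^{1,r}(\dom)}\le C\max(\|v\|_{W^{2,r}(\dom)},\|\tilde v\|_{W^{2,r}(\dom)})^{p-2}\|v-\tilde v\|_{W^{2,r}(\dom)}$, which exploits the elementary inequality $\big||\xi_1|^{p-2}\xi_1-|\xi_2|^{p-2}\xi_2\big|\le C(|\xi_1|+|\xi_2|)^{p-2}|\xi_1-\xi_2|$ valid for $p\ge2$.

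Combining the linear estimate with these nonlinear bounds gives $\|\Phi(v)\|_{W^{2,r}(\dom)}\le C(\delta+\rho^{p-1})$ and $\|\Phi(v)-\Phi(\tilde v)\|_{W^{2,r}(\dom)}\le C\rho^{p-2}\|v-\tilde v\|_{W^{2,r}(\dom)}$. Since $p-1>1$ and $p-2>0$, one chooses first $\rho$ small and then $\delta$ small so that $\Phi$ maps $B_\rho$ into itself with contraction constant $C\rho^{p-2}<1$. The Banach fixed point theorem then produces a unique $u\in B_\rho$ with $\Phi(u)=u$, which is the desired strong solution; the quantitative bound follows by substituting $u$ back into the fixed-point identity. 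Uniqueness within $B_\rho$ is automatic, while uniqueness among all $W^{2,r}(\dom)$ solutions follows from the uniqueness already established in Proposition \ref{weak-existence}.

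I expect the main obstacle to be precisely the nonlinear mapping estimate for $N$ in the $W^{1,r}$ (equivalently, the divergence-in-$L^r$) norm. One must differentiate $|\nabla v|^{p-2}\nabla v$, which is only $C^1$ and degenerates where $\nabla v=0$, and obtain the chain-rule bound uniformly over the small ball with a contraction constant strictly below one. A further delicate point is the coefficient $a$: since it is assumed merely in $L^\infty(\dom)$, the product $a|\nabla v|^{p-2}\nabla v$ need not lie in $W^{1,r}(\dom)$, so to close the argument in $W^{2,r}$ one should either keep the equation in divergence form and apply the divergence-form $L^r$ (Calder\'on--Zygmund / Meyers) estimate, which already yields $\nabla u\in L^r(\dom)$ and hence H\"older-continuous gradients since $r>n$, or strengthen the regularity assumed on $a$ in this section. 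Making the degenerate estimate and the treatment of $a$ compatible with the $W^{2,r}$ conclusion is the technical heart of the proof.
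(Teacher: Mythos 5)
Your overall strategy---reduce to homogeneous boundary data, invert the linear operator $\nabla\cdot(\sigma\nabla\cdot)$ in $L^r$, and run a Banach fixed point argument on a small ball in $W^{2,r}$---is exactly the paper's approach. However, there is a genuine gap in the key nonlinear estimate for $2<p<3$. You claim the Lipschitz bound $\|N(v)-N(\tilde v)\|_{W^{1,r}(\dom)}\leq C\max(\|v\|_{W^{2,r}},\|\tilde v\|_{W^{2,r}})^{p-2}\|v-\tilde v\|_{W^{2,r}}$. The zeroth-order part of this is fine (it is the inequality $||\xi|^{p-2}\xi-|\zeta|^{p-2}\zeta|\leq C(|\xi|+|\zeta|)^{p-2}|\xi-\zeta|$ you cite), but the gradient part is not: differentiating $N(v)$ produces terms of the form $|\nabla v|^{p-2}\triangle v$ and $|\nabla v|^{p-4}\pr_jv\,\pr_k\bar v\,\pr_{jk}v$, and when you take differences you must control $\bigl||\nabla v|^{p-2}-|\nabla\tilde v|^{p-2}\bigr|$ and the analogous difference of $|\xi|^{p-4}\xi_j\xi_k$. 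For $2<p<3$ these maps are only H\"older continuous of order $p-2<1$ (respectively, of an even smaller order for the tensor terms), not Lipschitz, so the factor multiplying $\triangle v$ can only be bounded by a power $\|v-\tilde v\|^{\min(p-2,1)}$ (or smaller) of the difference. Consequently $\Phi$ is not Lipschitz on the ball, your contraction constant $C\rho^{p-2}$ cannot be obtained, and the fixed point argument as you state it does not close.

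The paper's proof confronts exactly this point: its Lemmas \ref{p-2} and \ref{holder} establish only H\"older-in-the-difference bounds with exponent $s=\frac{1}{4}\min(p-2,1)$, and the contraction is then run with respect to the snowflaked metric $d(v_1,v_2)=\|v_1-v_2\|_{W^{2,r}_0(\dom)}^{s}$, which is still a complete metric on the closed ball, so Banach's theorem applies. Your argument can be repaired in the same way (and is essentially correct as written when $p\geq3$, where the relevant maps are genuinely Lipschitz on bounded sets). Your remark about the coefficient $a$ is well taken: the expansion of $\nabla\cdot(a|\nabla v|^{p-2}\nabla v)$ requires $\nabla a\in L^r$, which the standing assumption $a\in L^\infty(\dom)$ does not provide; the paper's own proof uses $\nabla a$ without comment, so this is a point where additional regularity on $a$ must in fact be assumed.
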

\begin{proof}
Let $G_\sigma$ be the inverse of the operator $\nabla\cdot(\sigma\nabla\cdot)$, with zero Dirichlet boundary conditions. That is, if
\begin{equation}
\left\{\begin{array}{l}\nabla\cdot(\sigma\nabla v)=S,\text{ in }\dom,\\ v|_{\pr\dom}=0,\end{array}\right.
\end{equation}
then $G_\sigma(S)=v$. $G_\sigma$ is a bounded operator form $L^r(\dom)$ into $W^{2,r}_0(\dom)$.

Suppose $f\in W^{2-\frac{1}{r},r}(\pr\dom)$ and let $v_f\in W^{2,r}(\dom)$ be the solution of 
\begin{equation}
\left\{\begin{array}{l}\nabla\cdot(\sigma\nabla v_f)=F,\text{ in }\dom,\\ v_f|_{\pr\dom}=f.\end{array}\right.
\end{equation}
There exists a constant $C>0$ such that 
\begin{equation}
||v_f||_{W^{2,r}(\dom)}\leq C\left(||f||_{W^{2-\frac{1}{r},r}(\dom)}+||F||_{L^r(\dom)}\right).
\end{equation}
For $v\in W^{2,r}_0(\dom)$ we define the operator
\begin{equation}
\T(v)=-G_\sigma\left[\nabla\cdot\left(a|\nabla(v_f+v)|^{p-2}\nabla(v_f+v)  \right)\right].
\end{equation}

Let $A(v)$, $B(v)$, and $H(v)$ be the matrix valued functions with coefficients
\begin{equation}
A_{jk}(v)=|\nabla(v_f+v)|^{p-4}\pr_j(v_f+v)\pr_k(\overline{v_f}+\overline{v}),
\end{equation}
\begin{equation}
B_{jk}(v)=|\nabla(v_f+v)|^{p-4}\pr_j(v_f+v)\pr_k({v_f}+{v}),
\end{equation}
\begin{equation}
H_{jk}(v)=\pr_{jk}(v_f+v).
\end{equation}
Then
\begin{multline}
\nabla\cdot\left(a|\nabla(v_f+v)|^{p-2}\nabla(v_f+v)  \right)=
|\nabla(v_f+v)|^{p-2}(\nabla a)\cdot\nabla(v_f+v)\\[5pt]+
a|\nabla(v_f+v)|^{p-2}\triangle(v_f+v)
+\frac{p-2}{2}a\tr\left(AH+B\overline{H} \right),
\end{multline}
which belongs to $L^r(\dom)$, since, by  Sobolev embedding, $W^{1,r}(\dom)\subset L^\infty(\dom)$.
We then see that $\T:W^{2,r}_0(\dom)\to W^{2,r}_0(\dom)$. 

Let $v_1, v_2\in W^{2,r}_0(\dom)$. Then, applying Lemma \ref{vector-estimates}, we have
\begin{multline}
\left\Vert\,|\nabla(v_f+v_1)|^{p-2}\nabla(v_f+v_1)-|\nabla(v_f+v_2)|^{p-2}\nabla(v_f+v_2)\right\Vert_{L^r(\dom)}\\[5pt]
\leq C\left(||\nabla v_f||_{L^\infty(\dom)}+||\nabla v_1||_{L^\infty(\dom)}+||\nabla v_2||_{L^\infty(\dom)}  \right)^{p-2}||\nabla(v_1-v_2)||_{L^r(\dom)}\\[5pt]
\leq C\left(||f||_{W^{2-\frac{1}{r},r}(\pr\dom)}+||F||_{L^r(\dom)}+||v_1||_{W^{2,r}_0(\dom)}+||v_2||_{W^{2,r}_0(\dom)}  \right)^{p-2}||v_1-v_2||_{W^{2,r}_0(\dom)}.
\end{multline}
Since
\begin{multline}
|\nabla(v_f+v_1)|^{p-2}\triangle(v_f+v_1)-|\nabla(v_f+v_2)|^{p-2}\triangle(v_f+v_2)\\[5pt]
= \left( |\nabla(v_f+v_1)|^{p-2}-|\nabla(v_f+v_2)|^{p-2}\right)\triangle(v_f+v_1)\\[5pt]
+|\nabla(v_f+v_2)|^{p-2}\triangle(v_1-v_2),
\end{multline}
and, by Lemma \ref{p-2},
\begin{multline}
||\nabla(v_f+v_1)|^{p-2}-|\nabla(v_f+v_2)|^{p-2}||_{L^\infty(\dom)}\\[5pt]
\leq C \left(||f||_{W^{2-\frac{1}{r},r}(\pr\dom)}+||F||_{L^r(\dom)}+||v_1||_{W^{2,r}_0(\dom)}+||v_2||_{W^{2,r}_0(\dom)}  \right)^{\mu_1(p-2)}\\[5pt]\times ||v_1-v_2||_{W^{1,r}(\dom)}^{\min(p-2,1)},
\end{multline}
we also have 
\begin{multline}
\left\Vert|\nabla(v_f+v_1)|^{p-2}\triangle(v_f+v_1)-|\nabla(v_f+v_1)|^{p-2}\triangle(v_f+v_1)\right\Vert_{L^r(\dom)}\\[5pt]
\leq C\left(||f||_{W^{2-\frac{1}{r},r}(\pr\dom)}+||F||_{L^r(\dom)}+||v_1||_{W^{2,r}_0(\dom)}+||v_2||_{W^{2,r}_0(\dom)}  \right)^{p-2}||v_1-v_2||_{W^{2,r}_0(\dom)}\\[5pt]
+C \left(||f||_{W^{2-\frac{1}{r},r}(\pr\dom)}+||F||_{L^r(\dom)}+||v_1||_{W^{2,r}_0(\dom)}+||v_2||_{W^{2,r}_0(\dom)}  \right)^{\mu_1(p-2)}\\[5pt]\times ||v_1-v_2||_{W^{1,r}(\dom)}^{\min(p-2,1)}.
\end{multline}
Finally, applying Lemma \ref{holder}, we have
\begin{multline}
\left\Vert \tr\left(A(v_1)H(v_1)+B(v_1)\overline{H}(v_1)-A(v_2)H(v_2)-B(v_2)\overline{H}(v_2)\right)\right\Vert_{L^r(\dom)}\\[5pt]
\leq C\left(||f||_{W^{2-\frac{1}{r},r}(\pr\dom)}+||F||_{L^r(\dom)}+||v_1||_{W^{2,r}_0(\dom)} \right)^{p-2}||v_1-v_2||_{W^{2,r}_0(\dom)}\\[5pt]
+C\left(||f||_{W^{2-\frac{1}{r},r}(\pr\dom)}+||F||_{L^r(\dom)}+||v_2||_{W^{2,r}_0(\dom)} \right)^{p-2}\\[5pt]\times
(||f||_{W^{2-\frac{1}{r},r}(\pr\dom)}+||F||_{L^r(\dom)}+||v_1||_{W^{2,r}_0(\dom)}+||v_2||_{W^{2,r}_0(\dom)} )^{\mu_2(p-2)} ||v_1-v_2||_{W^{2,r}_0(\dom)}^{s},
\end{multline}
where $s=\frac{1}{4}\min({p-2},1)$.

We see now that there is a $\delta>0$ such that if 
\begin{equation}
||F||_{L^r(\dom)}, ||f||_{W^{2-\frac{1}{r},r}(\pr\dom)}<\delta,
\end{equation}
then $\T$ is a contraction (taking the distance to be $d(v_1,v_2)=||v_1-v_2||_{W^{2,r}_0(\dom)}^{s}$) on the ball of radius $\delta$ in $W^{2,r}_0(\dom)$. That is, there exists $\kappa\in(0,1)$ such that if $||v_1||_{W^{2,r}_0(\dom)}, ||v_2||_{W^{2,r}_0(\dom)}<\delta$, then
\begin{equation}
||\T(v_1)-\T(v_2)||_{W^{2,r}_0(\dom)}^s\leq\kappa ||v_1-v_2||_{W^{2,r}_0(\dom)}^s
 \end{equation} 
 Let $v\in W^{2,r}_0(\dom)$ be the fixed point of this contraction. Then $u=v_f+v$ is the solution whose existence we needed to prove.

Furthermore,
\begin{multline}
||v||_{W^{2,r}_0(\dom)}\leq ||\T(v)-\T(0)||_{W^{2,r}_0(\dom)}+||\T(0)||_{W^{2,r}_0(\dom)}\\[5pt]
\leq \kappa^{\frac{1}{s}}||v||_{W^{2,r}_0(\dom)}+C\left(||f||_{W^{2-\frac{1}{r},r}(\pr\dom)}+||F||_{L^r(\dom)}\right)^{p-1},
\end{multline}
which implies that
\begin{equation}
||v||_{W^{2,r}_0(\dom)}\leq C\left(||f||_{W^{2-\frac{1}{r},r}(\pr\dom)}+||F||_{L^r(\dom)}\right)^{p-1}.
\end{equation}
This gives
\begin{equation}
||u||_{W^{2,r}(\dom)}\leq C\left(||f||_{W^{2-\frac{1}{r},r}(\pr\dom)}+||F||_{L^r(\dom)}+ \left(||f||_{W^{2-\frac{1}{r},r}(\pr\dom)}+||F||_{L^r(\dom)}\right)^{p-1}\right).
\end{equation}
\end{proof}

\section{Reconstruction when $p>2$}\label{reconstruction-1}

In this section we will give a reconstruction procedure for the parameters $\sigma$, $a$, and $p$, in the case when $p>2$. We do this by imposing boundary data multiplied by a parameter $\epsilon$ and then using the asymptotic expansion of the Dirichlet-to-Neumann map in $\epsilon$, as $\epsilon\to0$. Using he order $\epsilon^{2p-3}$ part of the DN map we obtain an integral identity involving $a$ and several solutions of the linear part of equation \eqref{eq}. Pluging complex geometric optics  solutions, constructed in \cite{SU}, into this identity allows us to reconstruct the coefficient $a$.

\subsection{Asymptotics of solutions}

For  $0<\epsilon<1$, and $f\in W^{2-\frac{1}{r},r}(\pr\dom)$ with $||f||_{W^{2-\frac{1}{r},r}(\pr\dom)}$, let $u_\epsilon$ be the solution to the equation
\begin{equation}\label{eqe}
\left\{\begin{array}{l}\nabla\cdot\left(\sigma\nabla u_\epsilon+a|\nabla u_\epsilon|^{p-2}\nabla u_\epsilon\right)=0,\\
u_\epsilon|_{\pr\dom}=\epsilon f.\end{array}\right.
\end{equation}
By Theorem \ref{thm-strong}, we have that
\begin{equation}
||{\epsilon}^{-1}u_\epsilon||_{W^{2,r}(\dom)}\leq C||f||_{W^{2-\frac{1}{r},r}(\pr\dom)}.
\end{equation}

We make the following Ansatz 
\begin{equation}\label{ansatz}
u_\epsilon=\epsilon u_0+{\epsilon}^{p-1}v_\epsilon,
\end{equation}
where $u_0\in W^{2,r}(\dom)$  satisfies the equation
\begin{equation}
\left\{\begin{array}{l} \nabla\cdot(\sigma\nabla u_0)=0,\\ u_0|_{\pr\dom}=f. \end{array}\right.
\end{equation}
 We have that
\begin{equation}
||u_0||_{W^{2,r}(\dom)}\leq C||f||_{W^{2-\frac{1}{r},r}(\pr\dom)}  .
\end{equation}

Note that $v_\epsilon$ satisfies
\begin{equation}
\left\{\begin{array}{l} \nabla\cdot(\sigma \nabla v_\epsilon) +{\epsilon}^{1-p}\nabla\cdot(a|\nabla u_\epsilon|^{p-2}\nabla u_\epsilon)=0,\\ v_\epsilon|_{\pr\dom}=0.\end{array}\right.
\end{equation}
By elliptic regularity
\begin{equation}
||v_\epsilon||_{W^{2,r}_0(\dom)}\leq C ||{\epsilon}^{-1}u_\epsilon||_{W^{2,r}(\dom)}^{p-1}\leq C||f||_{W^{2-\frac{1}{r},r}(\pr\dom)}^{p-1}.
\end{equation}
This shows that 
\begin{lem}
$v_\epsilon$ is bounded in $W^{2,r}_0(\dom)$ uniformly as ${\epsilon}\to0$. 
\end{lem}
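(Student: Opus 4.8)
The plan is to read the lemma directly off the linear elliptic problem satisfied by $v_\epsilon$, using the $(p-1)$-homogeneity of the weighted $p$-Laplacian term to neutralise the singular prefactor $\epsilon^{1-p}$. First I would record that $v_\epsilon$ solves the \emph{linear} boundary value problem $\nabla\cdot(\sigma\nabla v_\epsilon)=-\epsilon^{1-p}\nabla\cdot(a|\nabla u_\epsilon|^{p-2}\nabla u_\epsilon)$ with $v_\epsilon|_{\pr\dom}=0$, so that, with $G_\sigma$ the solution operator introduced in the proof of Theorem \ref{thm-strong}, we have $v_\epsilon=-\epsilon^{1-p}G_\sigma[\nabla\cdot(a|\nabla u_\epsilon|^{p-2}\nabla u_\epsilon)]$. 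The computation carried out in that proof --- expanding the divergence and using the Sobolev embedding $W^{1,r}(\dom)\subset L^\infty(\dom)$, valid since $r>n$ --- shows that $\nabla\cdot(a|\nabla u_\epsilon|^{p-2}\nabla u_\epsilon)\in L^r(\dom)$ and furnishes the bound $||\nabla\cdot(a|\nabla u_\epsilon|^{p-2}\nabla u_\epsilon)||_{L^r(\dom)}\leq C||u_\epsilon||_{W^{2,r}(\dom)}^{p-1}$, each term in the expansion being a product of $(p-2)$ factors of $\nabla u_\epsilon$ measured in $L^\infty$, one further factor of $\nabla u_\epsilon$, and one second-order factor measured in $L^r$. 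Since $G_\sigma:L^r(\dom)\to W^{2,r}_0(\dom)$ is bounded, this gives $||v_\epsilon||_{W^{2,r}_0(\dom)}\leq C\epsilon^{1-p}||u_\epsilon||_{W^{2,r}(\dom)}^{p-1}$.

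The decisive step is the cancellation of the $\epsilon$ powers. Writing $||u_\epsilon||_{W^{2,r}(\dom)}^{p-1}=\epsilon^{p-1}||\epsilon^{-1}u_\epsilon||_{W^{2,r}(\dom)}^{p-1}$ --- equivalently, using the homogeneity $|\nabla(\epsilon w)|^{p-2}\nabla(\epsilon w)=\epsilon^{p-1}|\nabla w|^{p-2}\nabla w$ to rewrite the source in terms of $\epsilon^{-1}u_\epsilon$ before applying $G_\sigma$ --- the factor $\epsilon^{1-p}$ is exactly absorbed, yielding $||v_\epsilon||_{W^{2,r}_0(\dom)}\leq C||\epsilon^{-1}u_\epsilon||_{W^{2,r}(\dom)}^{p-1}$. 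Invoking Theorem \ref{thm-strong} (applied with source $F=0$ and boundary data $\epsilon f$, whose norm $\epsilon||f||_{W^{2-\frac{1}{r},r}(\pr\dom)}$ lies below the smallness threshold $\delta$ for all $\epsilon\in(0,1)$), the right-hand side is bounded by $C||f||_{W^{2-\frac{1}{r},r}(\pr\dom)}^{p-1}$, a quantity independent of $\epsilon$. This is the asserted uniform bound.

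The only point that requires care is precisely this interplay of scales. Taken in isolation the prefactor $\epsilon^{1-p}$ blows up as $\epsilon\to0$ because $p>2$, so the estimate can only survive if the nonlinear source is correspondingly small. That smallness is guaranteed by two facts established earlier: the $(p-1)$-homogeneity of the $p$-Laplacian term, which produces the compensating $\epsilon^{p-1}$, and the uniform control $||\epsilon^{-1}u_\epsilon||_{W^{2,r}(\dom)}\leq C||f||_{W^{2-\frac{1}{r},r}(\pr\dom)}$ coming from the contraction argument of Theorem \ref{thm-strong}. Once these are in place the lemma is immediate, and no regularity input beyond $r>n$ --- needed only for the $L^\infty$ control of $\nabla u_\epsilon$ --- is required.
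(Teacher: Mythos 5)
Your proposal is correct and follows essentially the same route as the paper: the paper likewise reads the bound off the linear problem solved by $v_\epsilon$, citing elliptic regularity (i.e.\ the boundedness of $G_\sigma:L^r(\dom)\to W^{2,r}_0(\dom)$ together with the $L^r$ control of the divergence established in the proof of Theorem \ref{thm-strong}) and the $(p-1)$-homogeneity to obtain $||v_\epsilon||_{W^{2,r}_0(\dom)}\leq C||\epsilon^{-1}u_\epsilon||_{W^{2,r}(\dom)}^{p-1}\leq C||f||_{W^{2-\frac{1}{r},r}(\pr\dom)}^{p-1}$. You have merely made explicit the cancellation of the $\epsilon$ powers that the paper leaves implicit.
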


\subsection{Asymptotics of the DN map}

The expansion of the solution $u_\epsilon$ gives us an expansion of the DN map in powers of $\epsilon$. Let $w\in W^{2,r}(\dom)$. 
Using Lemma \ref{vector-estimates} we obtain that
\begin{multline}
\left|\int_{\dom} a\left(|\nabla u_\epsilon|^{p-2}\nabla u_\epsilon-\epsilon^{p-1}|\nabla u_0|^{p-2}\nabla u_0    \right)\cdot \nabla \overline{w}   \right|\\[5pt]
\leq C \epsilon^{2p-3}\int_\dom \left( |\epsilon^{-1}\nabla u_\epsilon|^{p-2}+|\nabla u_0|^{p-2}  \right)|\nabla v_\epsilon|\,|\nabla w|=\mathscr{O}(\epsilon^{2p-3})
\end{multline}
Then
\begin{multline}
\la\Lambda_{\sigma,a,p}(\epsilon f),w|_{\pr\dom}\ra
= \int_\dom (\sigma+a|\nabla u_\epsilon|^{p-2})\nabla u_\epsilon\cdot\nabla \overline{w}
=\epsilon\int_\dom\sigma\nabla u_0\cdot\nabla  \overline{w}\\[5pt]
+\epsilon^{p-1}\int_\dom \sigma\nabla v_\epsilon\cdot\nabla  \overline{w}
+\epsilon^{p-1}\int_\dom a |\nabla u_0|^{p-2}\nabla u_0\cdot\nabla  \overline{w}+\mathscr{O}(\epsilon^{2p-3}).
\end{multline}
Here we can observe that, since $p>2$, we have
\begin{equation}
\lim_{\epsilon\to0}\frac{1}{\epsilon}\la\Lambda_{\sigma,a,p}(\epsilon f),w|_{\pr\dom}\ra=\int_\dom\sigma\nabla u_0\cdot\nabla  \overline{w}=\la\Lambda_\sigma(f),w\ra.
\end{equation}
The left hand side is exactly the DN map associated to the equation
\begin{equation}
\nabla\cdot(\sigma\nabla u_0)=0.
\end{equation}
We can observe here that, by the the result of \cite{N},  $\sigma$ can be recovered from $\Lambda_\sigma$, and hence from $\Lambda_{\sigma,a,p}$.

Suppose now that $w$ is such that $\nabla\cdot(\sigma\nabla w)=0$. Since $v_\epsilon\in W^{1,q}_0(\dom)$, 
\begin{equation}
\int_\dom \sigma\nabla v_\epsilon\cdot\nabla  \overline{w}=0.
\end{equation}
Define
\begin{multline}
I(u_0,{w})=\lim_{\epsilon\to0+}\epsilon^{1-p}\left[\la\Lambda_{\sigma,a,p}(\epsilon u_0|_{\pr\dom}),  \overline{w}|_{\pr\dom}\ra - \epsilon\int_\dom\sigma\nabla u_0\cdot\nabla  {w} \right]\\[5pt]
=\int_\dom a |\nabla u_0|^{p-2}\nabla u_0\cdot\nabla  {w}.
\end{multline}
With this notation we can write
\begin{equation}\label{dn-asymptotics}
\la\Lambda_{\sigma,a,p}(\epsilon f),w|_{\pr\dom}\ra
=\epsilon\la\Lambda_\sigma,w|_{\pr\dom}\ra+\epsilon^{p-1}I(u_0,\overline{w})+\mathscr{O}(\epsilon^{2p-3}).
\end{equation}

\subsection{Recovery of the parameter $a$}

Let the  functions $u_0, u_1, u_2, u_3\in C^{\infty}(\overline{\dom})$ be such that $\nabla\cdot(\sigma\nabla u_j)=0$, $j=0,1,2,3,4$.  We will further require that $\nabla u_1$ does not have any zeros.

If the DN map $\Lambda_{\sigma,a,p}$ is known, then $I(u_0,u_3)$  will also be known. 
 Note that
\begin{equation}
\left.\pr_{\bar z} \left[|\nabla(u_1+z \bar u_2)|^{p-2}\nabla(u_1+z \bar u_2)\right]\right|_{z=0}
=\frac{p-2}{2}|\nabla u_1|^{p-4}(\nabla u_1\cdot\nabla u_2)\nabla u_1,
\end{equation}
and
\begin{multline}
\left.\pr_{z} \left[|\nabla(u_1+z  u_2)|^{p-2}\nabla(u_1+z  u_2)\right]\right|_{z=0}\\[5pt]
=\frac{p-2}{2}|\nabla u_1|^{p-4}(\nabla\bar u_1\cdot\nabla u_2)\nabla u_1
+|\nabla u_1|^{p-2}\nabla  u_2
\end{multline}
Define
\begin{multline}
I(u_1,u_2,u_3)=\frac{2}{p-2}\pr_{\bar z}\left.I(u_1+z\bar u_2,u_3)\right|_{z=0}\\[5pt]
=\int_\dom a|\nabla u_1|^{p-4}(\nabla u_1\cdot\nabla u_2)(\nabla u_1\cdot\nabla u_3),
\end{multline}
and
\begin{multline}
J(u_1,u_2,u_3)=\frac{2}{p-2}\pr_{ z}\left.I(u_1+z u_2,u_3)\right|_{z=0}\\[5pt]
=\int_\dom a|\nabla u_1|^{p-4}\left[(\nabla \bar u_1\cdot\nabla u_2)(\nabla u_1\cdot\nabla u_3)
+\frac{p-2}{2}|\nabla u_1|^2\nabla u_2\cdot\nabla u_3\right], 
\end{multline}
This functionals are also known from the boundary data.

We will choose the solution $u_1$ to be \emph{real} valued and set $\beta_{u_1}=a |\nabla u_1|^{p-2}$. In this case we observe that
\begin{equation}\label{final-integral-id}
K(u_1,u_2,u_3)=\frac{2}{p-2}\left[J(u_1,u_2,u_3)-I(u_1,u_2,u_3)\right]=\int_\dom \beta_{u_1} \nabla u_2\cdot\nabla u_3
\end{equation}
is known from the boundary data.

We will recover $\beta_{u_1}$, for any $u_1$, from $K$. To do this, we will choose $u_2$ and $u_3$ to be complex geometric optics (CGO) solutions for the linear part of the equation. Such solutions were constructed in \cite{SU}. We recall their properties here.
\begin{prop}[see {\cite[Theorem 1.1]{SU}}]
If $\zeta\in\C^n$ is such that $\zeta\cdot\zeta=0$, then the equation
\begin{equation}
\nabla\cdot(\sigma\nabla u)=0
\end{equation}
has  complex geometric optics (CGO) solutions of the form
\begin{equation}
u(x)=e^{\zeta\cdot x}\sigma^{-1/2}(x)(1+r(x,\zeta)),
\end{equation}
where $r$ satisfies the equation
\begin{equation}\label{eq-r}
\triangle r+\zeta\cdot\nabla r-V r=V,\quad V=\frac{\triangle \sigma^{1/2}}{\sigma^{1/2}},
\end{equation}
and the estimates
\begin{equation}
||r||_{W^{s,2}(\dom)}\leq \frac{C_s}{|\zeta|}, \quad s>\frac{n}{2}.
\end{equation}
\end{prop}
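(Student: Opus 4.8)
Since this statement merely reproduces the construction of \cite{SU}, I would follow the Sylvester--Uhlmann argument, whose plan is to reduce the conductivity equation to a Schr\"odinger equation and then build exponentially growing solutions of the latter by a Fourier-analytic fixed-point scheme. First I would extend $\sigma$ from $\bar\dom$ to all of $\R^n$, keeping it smooth, bounded above and below by positive constants, and identically equal to a positive constant outside a ball containing $\bar\dom$; this makes the potential $V=\triangle\sigma^{1/2}/\sigma^{1/2}$ smooth and compactly supported. The Liouville substitution $u=\sigma^{-1/2}\psi$ then turns $\nabla\cdot(\sigma\nabla u)=0$ into the Schr\"odinger equation $(-\triangle+V)\psi=0$, as a direct computation shows, so it suffices to produce a solution of the latter of the form $\psi=e^{\zeta\cdot x}(1+r)$.

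Next I would derive the equation for $r$. Substituting $\psi=e^{\zeta\cdot x}(1+r)$ and using $\zeta\cdot\zeta=0$ to cancel the zeroth order contribution coming from differentiating the exponential twice, one is left with an equation of the form \eqref{eq-r}. Writing $\triangle_\zeta=e^{-\zeta\cdot x}\,\triangle\, e^{\zeta\cdot x}$ for the conjugated Laplacian, this reads $\triangle_\zeta r-Vr=V$, so that $r$ should be realized as a fixed point of the map $r\mapsto G_\zeta(V+Vr)$, where $G_\zeta$ is a suitable right inverse of $\triangle_\zeta$.

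The analytic core of the argument, and the step I expect to be the main obstacle, is the construction of $G_\zeta$ together with its decay in $|\zeta|$. Working in the weighted spaces $L^2_\delta(\R^n)$ with $-1<\delta<0$, the Fourier symbol of $\triangle_\zeta$ is $-|\xi|^2+2i\zeta\cdot\xi$, whose real and imaginary parts vanish simultaneously only on a sphere of codimension two; the delicate point is an Agmon--H\"ormander type estimate showing that, for $|\zeta|$ large, $\triangle_\zeta$ admits a right inverse $G_\zeta:L^2_{\delta+1}(\R^n)\to L^2_\delta(\R^n)$ with $\|G_\zeta\|\leq C/|\zeta|$. Granting this, the map $r\mapsto G_\zeta(V+Vr)$ is a contraction on $L^2_\delta$ for $|\zeta|$ sufficiently large, since $\|G_\zeta(Vr)\|_{L^2_\delta}\leq C|\zeta|^{-1}\|V\|_{L^\infty}\|r\|_{L^2_\delta}$, and its fixed point satisfies $\|r\|_{L^2_\delta}\leq C|\zeta|^{-1}\|V\|_{L^2_{\delta+1}}\leq C/|\zeta|$.

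Finally, to upgrade this to the stated $W^{s,2}$ bound with $s>\tfrac{n}{2}$, I would bootstrap. Differentiating \eqref{eq-r} and applying $G_\zeta$ repeatedly, using that $V$ is smooth and compactly supported so that all commutators $[\,\pr^\alpha,V\,]$ are controlled, propagates the $1/|\zeta|$ gain to each derivative and yields $\|r\|_{W^{s,2}(\dom)}\leq C_s/|\zeta|$ on the bounded set $\dom$ for every $s$. Transporting back through $u=\sigma^{-1/2}e^{\zeta\cdot x}(1+r)$ then produces CGO solutions of the original conductivity equation with the claimed form and estimates.
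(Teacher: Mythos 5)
Your outline is correct and is exactly the Sylvester--Uhlmann argument that the paper itself invokes without proof (the proposition is imported verbatim from \cite[Theorem 1.1]{SU}): Liouville reduction to $(-\triangle+V)\psi=0$, the conjugated operator with symbol $-|\xi|^2+2i\zeta\cdot\xi$, the $\|G_\zeta\|_{L^2_{\delta+1}\to L^2_\delta}\leq C/|\zeta|$ estimate on weighted spaces with $-1<\delta<0$, a contraction for $|\zeta|$ large, and a bootstrap for the higher Sobolev norms. The only point worth flagging is that the direct substitution you describe produces $\triangle r+2\zeta\cdot\nabla r-Vr=V$, with a factor $2$ that is absent from the paper's equation \eqref{eq-r}; this is a discrepancy in the paper's transcription of the reference rather than in your argument, but it propagates into the constant in front of $V$ in the later expansion \eqref{u2u3-expansion}.
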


 Now let  $\xi\in\R^{n}$ be arbitrary and choose $\eta,\mu\in S^{n-1}$ such that 
	\begin{equation}
	\xi\cdot\eta=\xi\cdot\mu=\eta\cdot\mu=0.
	\end{equation}
	Using these, we define $\zeta_{2},\zeta_{3}\in \C^{n}$ to be
	\begin{equation}\label{definition of zeta i}
	\zeta_{2}= \tau \mu-i\left(\frac{\xi}{2}+s\eta\right),\quad  
	\zeta_{3}=- \tau\mu-i\left(\frac{\xi}{2}-s\eta\right)
	\end{equation}
	where $s\in\R$ and $r$ satisfies 
	\begin{equation}
	\tau^{2}=\frac{\lvert\xi\rvert^{2}}{4}+s^{2}.
	\end{equation}
	With these choices, we have
	\begin{equation}
	\zeta_{2}\cdot\zeta_{2}=\zeta_{3}\cdot\zeta_{3}=0,\quad \zeta_2+\zeta_3=-i\xi.
	\end{equation}
We will take $u_2=e^{\zeta_2\cdot x}\sigma^{-1/2}(1+r_2)$, $u_3=e^{\zeta_3\cdot x}\sigma^{-1/2}(1+r_3)$ to be the corresponding CGO solutions.

Note that
\begin{equation}
\nabla u_i= e^{\zeta_i\cdot x}\left[ \zeta_i\sigma^{-\frac{1}{2}}(1+r_i)+\nabla (\sigma^{-\frac{1}{2}})(1+r_i)+\sigma^{-\frac{1}{2}}\nabla r_i   \right],
\end{equation}
so
\begin{multline}
\nabla u_2\cdot\nabla u_3 =e^{-i\xi\cdot x}\left[ \sigma^{-1}\zeta_2\cdot\zeta_3+\sigma^{-\frac{1}{2}}(\zeta_2+\zeta_3)\cdot\nabla(\sigma^{-\frac{1}{2}})(1+r_2)(1+r_3)\right.\\[5pt]
\left.
+|\nabla(\sigma^{-\frac{1}{2}})|^2+\sigma^{-1}(\zeta_2\cdot\nabla r_3+\zeta_3\cdot\nabla r_2)   \right]+\mathscr{O}(s^{-1})\\[5pt]
=e^{-i\xi\cdot x}[  \left( -\frac{1}{2}|\xi|^2 \right)\sigma^{-1} - i\sigma^{-\frac{1}{2}}\xi\cdot\nabla(\sigma^{-\frac{1}{2}}) +
|\nabla(\sigma^{-\frac{1}{2}})|^2\\[5pt]
+\sigma^{-1}(\zeta_2\cdot\nabla r_3+\zeta_3\cdot\nabla r_2)  ]+\mathscr{O}(s^{-1}).
\end{multline}
Here the $\mathscr{O}(s^{-1})$ is to be taken in the sense of $L^\infty(\dom)$ norms.

Using \eqref{eq-r}, we compute
\begin{equation}
\zeta_2\cdot\nabla r_3=(-i\xi-\zeta_3)\cdot\nabla r_3=-\xi\cdot\nabla r_3-V+\triangle r_3-V r_3=-V+\mathscr{O}(s^{-1}).
\end{equation}
Similarly
\begin{equation}
\zeta_3\cdot\nabla r_2=-V+\mathscr{O}(s^{-1}).
\end{equation}
Then 
\begin{multline}\label{u2u3-expansion}
\nabla u_2\cdot\nabla u_3 = 
e^{-i\xi\cdot x} [  \left( -\frac{1}{2}|\xi|^2 \right)\sigma^{-1} + i\sigma^{-\frac{3}{2}}\xi\cdot\nabla(\sigma^{\frac{1}{2}}) \\[5pt]+
\sigma^{-2}|\nabla(\sigma^{\frac{1}{2}})|^2-2V \sigma^{-1}  ]+\mathcal{O}(s^{-1}).
\end{multline}

We can now take the limit $s\to\infty$ in \eqref{final-integral-id} to obtain
\begin{multline}\label{s-limit}
\int e^{-i\xi\cdot x}\beta_{u_1}  [  \left( -\frac{1}{2}|\xi|^2 \right)\sigma^{-1} + i\sigma^{-\frac{3}{2}}\xi\cdot\nabla(\sigma^{\frac{1}{2}}) \\[5pt]+
\sigma^{-2}|\nabla(\sigma^{\frac{1}{2}})|^2-2V \sigma^{-1}  ]=\lim_{s\to\infty} K(u_1,u_2,u_3).
\end{multline}
Then
\begin{multline}
\frac{1}{2}\triangle(\sigma^{-\frac{1}{2}}\beta_{u_1})+\nabla\cdot\left( \sigma^{-1}\nabla(\sigma^{\frac{1}{2}})\beta_{u_1}\right)\\[5pt]+
\left( \sigma^{-\frac{3}{2}}|\nabla(\sigma^{\frac{1}{2}})|^2-2V \sigma^{-\frac{1}{2}}\right)\beta_{u_1} = \mathscr{F}^{-1}\lim_{s\to\infty} K(u_1,u_2,u_3),
\end{multline}
on $\mathbb{R}^n$, in the sense of distributions. In the identity above we have extended $\sigma$ so that it is $C^\infty$ on the whole of $\mathbb{R}^n$ and the support of $\sigma-1$ is compact. Since
\begin{multline}
\triangle(\sigma^{-\frac{1}{2}}\beta_{u_1})=\sigma^{-\frac{1}{2}}\triangle \beta_{u_1}-2\sigma^{-1}\nabla(\sigma^{\frac{1}{2}})\cdot\nabla\beta_{u_1}\\[5pt] +\left(2\sigma^{-\frac{3}{2}}|\nabla(\sigma^{\frac{1}{2}})|^2-\sigma^{-1}\triangle (\sigma^{\frac{1}{2}})  \right)\beta_{u_1}
\end{multline}
and
\begin{multline}
\nabla\cdot\left( \sigma^{-1}\nabla(\sigma^{\frac{1}{2}})\beta_{u_1} \right)=
 \sigma^{-1}\nabla(\sigma^{\frac{1}{2}})\cdot\nabla\beta_{u_1}+\left(\sigma^{-1}\triangle(\sigma^{\frac{1}{2}})-2\sigma^{-\frac{3}{2}}|\nabla\sigma^{\frac{1}{2}})|^2  \right)\beta_{u_1},
\end{multline}
we can conclude that $\beta_{u_1}$ satisfies
\begin{equation}\label{eq-beta}
\triangle \beta_{u_1}-3V\beta_{u_1}=2\sigma^{1/2} \mathscr{F}^{-1}\lim_{s\to\infty} K(u_1,u_2,u_3),
\end{equation}
on $\mathbb{R}^n$, in the sense of distributions. 
Suppose there exists $f\in \mathscr{E}'(\mathbb{R}^n)$ 
such that
\begin{equation}
\triangle f-3Vf=0.
\end{equation}
Then for any $\varphi\in C^\infty(\mathbb{R}^n)$ we have
\begin{equation}
\la f,\triangle\varphi-3V\varphi\ra=0.
\end{equation}
Since for any $\psi\in\mathscr{S}(\mathbb{R}^n)$ we can find $\varphi$ such that
\begin{equation}
\triangle\varphi-3V\varphi=\psi,
\end{equation}
it follows that $\la f,\psi\ra=0$ for all $\psi\in\mathscr{S}(\mathbb{R}^n)$, so $f=0$. 
This implies \eqref{eq-beta} can  have at most one compactly supported solution in distributions.
We can then invert the operator $\triangle-3V$ and we obtain the reconstruction formula
\begin{equation}
a=2|\nabla u_1|^{2-p}(\triangle-3V)^{-1}\left[ \sigma^{1/2} \mathscr{F}^{-1}\lim_{s\to\infty} K(u_1,u_2,u_3) \right].
\end{equation}

\section{Reconstruction when $p\in(1,2)$}\label{reconstruction-2}

In this section we give the reconstruction of the parameters $\sigma$, $a$, and $p$, in the case when $p\in(1,2)$. In this case the nonlinear term is sublinear, so we consider Dirichlet data of the form $\epsilon^{-1}f$, where $\epsilon\to0$. Once an asymptotic expansion for the DN map is established, the reconstruction method is the same as in the previous case.

\subsection{Asymptotics of solutions}

For  $0<\epsilon<1$, and $f\in \X_p=W^{\frac{1}{2},2}(\pr\dom)$, let $u_\epsilon$ be the solution to the equation
\begin{equation}\label{eqe2}
\left\{\begin{array}{l}\nabla\cdot\left(\sigma\nabla u_\epsilon+a|\nabla u_\epsilon|^{p-2}\nabla u_\epsilon\right)=0,\\
u_\epsilon|_{\pr\dom}=\epsilon^{-1} f.\end{array}\right.
\end{equation}
By Proposition \ref{weak-existence}, we have that
\begin{equation}
||\epsilon u_\epsilon||_{W^{1,2}(\dom)}\leq C||f||_{\X_p} .
\end{equation}

We make the following Ansatz 
\begin{equation}\label{ansatz2}
u_\epsilon=\epsilon^{-1} u_0+{\epsilon}^{1-p}v_\epsilon,
\end{equation}
where $u_0\in W^{1,2}(\dom)$  satisfies the equation
\begin{equation}
\left\{\begin{array}{l} \nabla\cdot(\sigma\nabla u_0)=0,\\ u_0|_{\pr\dom}=f. \end{array}\right.
\end{equation}
 We have that
\begin{equation}
||u_0||_{W^{1,2}(\dom)}\leq C||f||_{\X_p}  .
\end{equation}

Note that $v_\epsilon$ satisfies
\begin{equation}
\left\{\begin{array}{l} \nabla\cdot(\sigma \nabla v_\epsilon) +{\epsilon}^{p-1}\nabla\cdot(a|\nabla u_\epsilon|^{p-2}\nabla u_\epsilon)=0\\ v_\epsilon|_{\pr\dom}=0.\end{array}\right.
\end{equation}
Since $p<2$, we have that $|\nabla u_\epsilon|^{p-2}\nabla u_\epsilon\in L^2(\dom)$ and
\begin{equation}
||\,|\nabla u_\epsilon|^{p-2}\nabla u_\epsilon||_{L^2(\dom)}\leq||\nabla u_\epsilon||_{L^2(\dom)}^{p-1}.
\end{equation}
Elliptic regularity then gives
\begin{equation}
||v_\epsilon||_{W^{1,2}(\dom)}\leq C ||{\epsilon}u_\epsilon||_{W^{1,2}(\dom)}^{p-1}.
\end{equation}
this shows that 
\begin{lem}
$v_\epsilon$ is bounded in $W^{1,2}(\dom)$ uniformly as ${\epsilon}\to0$. 
\end{lem}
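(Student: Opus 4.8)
The plan is to read the bound off directly from the divergence-form equation that $v_\epsilon$ satisfies, combined with the a priori estimate already established in Proposition \ref{weak-existence}. The key structural point is that the powers of $\epsilon$ in the Ansatz \eqref{ansatz2} are chosen precisely so that the $\epsilon^{p-1}$ prefactor multiplying the nonlinear source cancels the $\epsilon^{-(p-1)}$ blow-up of the $W^{1,2}$ norm of $u_\epsilon$, leaving $v_\epsilon$ of order one.

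First I would invoke standard $L^2$ elliptic regularity for the linear operator $\nabla\cdot(\sigma\nabla\,\cdot)$ with zero Dirichlet data. Since $v_\epsilon$ solves
\begin{equation}
\nabla\cdot(\sigma\nabla v_\epsilon)=-\epsilon^{p-1}\nabla\cdot\left(a|\nabla u_\epsilon|^{p-2}\nabla u_\epsilon\right),\quad v_\epsilon|_{\pr\dom}=0,
\end{equation}
and the right-hand side is in divergence form with flux $a|\nabla u_\epsilon|^{p-2}\nabla u_\epsilon\in L^2(\dom)$, the regularity estimate gives
\begin{equation}
||v_\epsilon||_{W^{1,2}(\dom)}\leq C\epsilon^{p-1}\left\Vert\,a|\nabla u_\epsilon|^{p-2}\nabla u_\epsilon\right\Vert_{L^2(\dom)}.
\end{equation}

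The main quantitative step is to control the nonlinear flux in $L^2$. Because $p\in(1,2)$, the flux has pointwise magnitude comparable to $|\nabla u_\epsilon|^{p-1}$, and since $2(p-1)<2$ while $\dom$ is bounded, H\"older's inequality with exponents $\frac{1}{p-1}$ and $\frac{1}{2-p}$ yields $\int_\dom|\nabla u_\epsilon|^{2(p-1)}\leq|\dom|^{2-p}\left(\int_\dom|\nabla u_\epsilon|^2\right)^{p-1}$, hence, using also the uniform bound $a<m^{-1}$ from \eqref{assumptions-2},
\begin{equation}
\left\Vert\,a|\nabla u_\epsilon|^{p-2}\nabla u_\epsilon\right\Vert_{L^2(\dom)}\leq C||\nabla u_\epsilon||_{L^2(\dom)}^{p-1}.
\end{equation}
Combining the two displays and absorbing $\epsilon^{p-1}$ into the norm gives $||v_\epsilon||_{W^{1,2}(\dom)}\leq C||\epsilon u_\epsilon||_{W^{1,2}(\dom)}^{p-1}$.

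Finally I would close the argument with the a priori estimate from Proposition \ref{weak-existence}, applied to $u_\epsilon$ with boundary data $\epsilon^{-1}f$ and $F=0$, which gives $||\epsilon u_\epsilon||_{W^{1,2}(\dom)}\leq C||f||_{\X_p}$. Substituting yields $||v_\epsilon||_{W^{1,2}(\dom)}\leq C||f||_{\X_p}^{p-1}$, a bound with no dependence on $\epsilon$, which is exactly the claimed uniform boundedness. I do not expect any serious obstacle: the only place where care is needed is the sublinearity of the nonlinear term, and it is precisely the condition $p<2$ that renders the $L^2$ control of the flux elementary via H\"older on a bounded domain. The entire content is the bookkeeping of the $\epsilon$-powers, which has been arranged so that $v_\epsilon$ remains of order one as $\epsilon\to0$.
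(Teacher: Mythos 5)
Your argument is correct and is essentially identical to the paper's: both pass to the equation $\nabla\cdot(\sigma\nabla v_\epsilon)=-\epsilon^{p-1}\nabla\cdot(a|\nabla u_\epsilon|^{p-2}\nabla u_\epsilon)$ with zero Dirichlet data, bound the flux in $L^2$ by $C\|\nabla u_\epsilon\|_{L^2}^{p-1}$ using $p<2$, and close with the estimate $\|\epsilon u_\epsilon\|_{W^{1,2}(\dom)}\leq C\|f\|_{\X_p}$ from Proposition \ref{weak-existence}. Your explicit H\"older step with exponents $\frac{1}{p-1}$ and $\frac{1}{2-p}$ merely spells out the constant $|\dom|^{(2-p)/2}$ that the paper leaves implicit.
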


\subsection{Asymptotics of the DN map}

We will  expand the DN map in powers of $\epsilon$. Let $w\in W^{1,2}(\dom)$. 
Using Lemma \ref{vector-estimates} we obtain that
\begin{multline}
\left|\int_{\dom} a\left(|\nabla u_\epsilon|^{p-2}\nabla u_\epsilon-\epsilon^{1-p}|\nabla u_0|^{p-2}\nabla u_0    \right)\cdot \nabla \overline{w}   \right|\\[5pt]
\leq C \epsilon^{3-2p}\int_\dom \left( |\epsilon\nabla u_\epsilon|^{p-2}+|\nabla u_0|^{p-2}  \right)|\nabla v_\epsilon|\,|\nabla w|=\mathscr{O}(\epsilon^{2p-3}).
\end{multline}
Then
\begin{multline}
\la\Lambda_{\sigma,a,p}(\epsilon^{-1} f),w\ra
= \int_\dom (\sigma+a|\nabla u_\epsilon|^{p-2})\nabla u_\epsilon\cdot\nabla \bar w
=\epsilon^{-1}\int_\dom\sigma\nabla u_0\cdot\nabla \bar w\\[5pt]
+\epsilon^{1-p}\int_\dom \sigma\nabla v_\epsilon\cdot\nabla \bar w
+\epsilon^{1-p}\int_\dom a |\nabla u_0|^{p-2}\nabla u_0\cdot\nabla \bar w+\mathscr{O}(\epsilon^{3-2p}).
\end{multline}
Since $p\in(1,2)$, we have
\begin{equation}
\lim_{\epsilon\to0}\epsilon\la\Lambda_{\sigma,a,p}(\epsilon^{-1} f),w\ra=\int_\dom\sigma\nabla u_0\cdot\nabla \bar w
=\la\Lambda_\sigma(f),w\ra.
\end{equation}
The left hand side is exactly the DN map associated to the equation
\begin{equation}
\nabla\cdot(\sigma\nabla u_0)=0.
\end{equation}
By the the result of \cite{N}, we have that $\sigma$ can be recovered from $\Lambda_{\sigma,a,p}$.

Suppose now that $w$ is such that $\nabla(\sigma\nabla w)=0$. Since $v_\epsilon\in W^{1,2}_0(\dom)$, 
\begin{equation}
\int_\dom \sigma\nabla v_\epsilon\cdot\nabla \bar w=0.
\end{equation}
This leaves us with the expansion
\begin{multline}\label{dn-asymptotics2}
\la\Lambda_{\sigma,a,p}(\epsilon f),w\ra
=\epsilon^{-1}\int_\dom\sigma\nabla u_0\cdot\nabla \bar w\\[5pt]
+\epsilon^{1-p}\int_\dom a |\nabla u_0|^{p-2}\nabla u_0\cdot\nabla \bar w+\mathscr{O}(\epsilon^{3-2p}).
\end{multline}

Define
\begin{multline}
I(u_0,{w})=\lim_{\epsilon\to0+}\epsilon^{p-1}\left[\la\Lambda_{\sigma,a,p}(\epsilon^{-1} u_0|_{\pr\dom}),  \overline{w}|_{\pr\dom}\ra - \epsilon^{-1}\int_\dom\sigma\nabla u_0\cdot\nabla  {w} \right]\\[5pt]
=\int_\dom a |\nabla u_0|^{p-2}\nabla u_0\cdot\nabla  {w}.
\end{multline}
With this notation we can write
\begin{equation}\label{dn-asymptotics-lower}
\la\Lambda_{\sigma,a,p}(\epsilon^{-1} f),w|_{\pr\dom}\ra
=\epsilon^{-1}\la\Lambda_\sigma,w|_{\pr\dom}\ra+\epsilon^{1-p}I(u_0,\overline{w})+\mathscr{O}(\epsilon^{3-2p}).
\end{equation}
The coefficient $a$ can then be recovered from the functional $I$ using the same method as in the $p>2$ case.

\appendix

\section{Some vector estimates}\label{appendix-A}

Here we gather a few useful inequalities for vectors in $\C^n$. Some of these are well known, and we give them without a proof. We do give a proof to the estimate in Lemma \ref{holder}, which we need in order to prove the existence of strong solutions.

\begin{lem}\label{vector-estimates}
Suppose $\xi,\zeta\in\C^n$, $p\in(1,\infty)$, then
\begin{equation}
\left|\,|\xi|^{p-2}\xi -|\zeta|^{p-2}\zeta  \right|\leq C (|\xi|+|\zeta|)^{p-2}|\xi-\zeta|,
\end{equation}
\begin{equation}
\re\left[ (|\xi|^{p-2}\xi -|\zeta|^{p-2}\zeta)\cdot(\overline{\xi}-\overline{\zeta})  \right]
\sim (|\xi|+|\zeta|)^{p-2}|\xi-\zeta|^2,
\end{equation}
\begin{equation}
\left|\,|\xi|^{p}-|\zeta|^p\right|\leq p\left(|\xi|^{p-1}+|\zeta|^{p-1}  \right)|\xi-\zeta|.
\end{equation}
\end{lem}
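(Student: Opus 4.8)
The final statement is Lemma \ref{vector-estimates}, which collects three vector inequalities in $\C^n$ for $p\in(1,\infty)$. Let me sketch proofs for each.

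\medskip

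\textbf{The plan.} The key observation is that all three inequalities concern the scalar or vector function $\xi\mapsto|\xi|^{p-2}\xi$ (or $\xi\mapsto|\xi|^p$), and the natural strategy is to write the difference as an integral of a derivative along the segment joining $\zeta$ to $\xi$. That is, I would set $\gamma(t)=(1-t)\zeta+t\xi$ for $t\in[0,1]$ and write
\begin{equation}
|\xi|^{p-2}\xi-|\zeta|^{p-2}\zeta=\int_0^1\frac{\dd}{\dd t}\left(|\gamma(t)|^{p-2}\gamma(t)\right)\dd t,
\end{equation}
reducing each estimate to a pointwise bound on the relevant derivative together with the elementary fact that $|\gamma(t)|\leq|\xi|+|\zeta|$ and that $|\gamma(t)|$ is comparable to $|\xi|+|\zeta|$ on a set of $t$ of definite measure (unless $\xi,\zeta$ are nearly antiparallel, which requires a little care). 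Since these vectors are complex, I would treat $\C^n$ as $\R^{2n}$ and compute derivatives accordingly, being careful that $|\cdot|$ is the Hermitian norm; alternatively one can use the real-variable inequalities for $\R^{2n}$ directly, since $|\xi|^{p-2}\xi$ real-linearly encodes the map $z\mapsto|z|^{p-2}z$ on $\R^{2n}$ whose component-wise behavior is classical.

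\medskip

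\textbf{The individual estimates.} For the first inequality, differentiating gives $\frac{\dd}{\dd t}(|\gamma|^{p-2}\gamma)=|\gamma|^{p-2}\gamma'+(p-2)|\gamma|^{p-4}\re(\gamma\cdot\overline{\gamma'})\,\gamma$ with $\gamma'=\xi-\zeta$, so the integrand is bounded by $C|\gamma(t)|^{p-2}|\xi-\zeta|$; integrating and bounding $|\gamma(t)|^{p-2}$ by $(|\xi|+|\zeta|)^{p-2}$ (valid for $p\geq2$; for $p<2$ one instead uses the convexity/monotonicity that keeps $|\gamma(t)|$ from being too small relative to $|\xi|+|\zeta|$ over most of $[0,1]$) yields the claim. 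The third inequality is the simplest: $|\xi|^p-|\zeta|^p=\int_0^1 p|\gamma(t)|^{p-2}\re(\gamma(t)\cdot\overline{\xi-\zeta})\dd t$, and $|\re(\gamma\cdot\overline{\xi-\zeta})|\leq|\gamma||\xi-\zeta|$ gives the bound $p\int_0^1|\gamma(t)|^{p-1}\dd t\,|\xi-\zeta|\leq p(|\xi|^{p-1}+|\zeta|^{p-1})|\xi-\zeta|$ after bounding $|\gamma(t)|^{p-1}\leq\max(|\xi|,|\zeta|)^{p-1}\leq|\xi|^{p-1}+|\zeta|^{p-1}$. The second (two-sided) inequality is the monotonicity estimate for the $p$-Laplacian; here I would separate the cases $p\geq2$ and $1<p<2$. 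For $p\geq2$ the lower bound follows from expanding $\re[(|\xi|^{p-2}\xi-|\zeta|^{p-2}\zeta)\cdot(\overline\xi-\overline\zeta)]$ and using that the symmetrized Jacobian of $z\mapsto|z|^{p-2}z$ is positive definite with smallest eigenvalue comparable to $|z|^{p-2}$; the upper bound is immediate from Cauchy--Schwarz and the first inequality.

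\medskip

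\textbf{Main obstacle.} The delicate point, and the one I expect to require the most care, is the lower bound in the second inequality in the regime $1<p<2$, where $|z|^{p-2}$ blows up near the origin and the comparison constant in $(|\xi|+|\zeta|)^{p-2}|\xi-\zeta|^2$ must be shown to be uniform. The standard resolution is to again use the integral representation and show $\int_0^1|\gamma(t)|^{p-2}\dd t\gtrsim(|\xi|+|\zeta|)^{p-2}$; the subtlety is that when $\xi$ and $\zeta$ point in nearly opposite directions, $\gamma(t)$ passes close to $0$, but since $p-2>-1$ the singularity $|\gamma(t)|^{p-2}$ remains integrable and in fact the integral is \emph{larger}, so the lower bound is not lost. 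Making this quantitative — handling the near-antiparallel case so that the constant stays bounded below independently of $\xi,\zeta$ — is the heart of the matter; everything else is routine. Since the lemma is stated with the $\sim$ (two-sided comparability) symbol and attributed to well-known results, I would in practice cite the standard references for these $p$-Laplacian monotonicity inequalities rather than reproduce the case analysis in full.
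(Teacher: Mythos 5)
The paper does not actually prove this lemma: it states that the three inequalities are standard and refers the reader to a list in the appendix of \cite{SZ}, so there is no argument in the paper to compare yours against. Your sketch is the standard line-integral proof and its skeleton is sound: writing the difference as $\int_0^1\frac{d}{dt}F(\gamma(t))\,dt$ with $\gamma(t)=(1-t)\zeta+t\xi$, identifying $\C^n$ with $\R^{2n}$ so that $\re[\,\cdot\,\overline{(\,\cdot\,)}]$ becomes the Euclidean pairing, and reducing everything to two facts about $\int_0^1|\gamma(t)|^{p-2}\,dt$. One correction on where the delicacy sits, though: for $1<p<2$ the \emph{lower} bound $\int_0^1|\gamma(t)|^{p-2}\,dt\geq(|\xi|+|\zeta|)^{p-2}$ is trivial, since $|\gamma(t)|\leq|\xi|+|\zeta|$ and the exponent is negative makes the integrand pointwise at least $(|\xi|+|\zeta|)^{p-2}$; combined with the positive definiteness of the symmetrized Jacobian (smallest eigenvalue $\min(1,p-1)|z|^{p-2}$) this gives the lower half of the second inequality with no case analysis. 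The genuinely delicate point for $1<p<2$ is the \emph{upper} bound $\int_0^1|\gamma(t)|^{p-2}\,dt\leq C_p(|\xi|+|\zeta|)^{p-2}$, needed for the first inequality (and hence the upper half of the second): your phrase that convexity ``keeps $|\gamma(t)|$ from being too small over most of $[0,1]$'' is not quite right, since $\gamma$ may pass through the origin; the correct mechanism is the one you name in your last paragraph, namely that $|\gamma(t)|\geq|t-t_*|\,|\xi-\zeta|$ for the minimizing $t_*$, so the singularity is integrable because $p-2>-1$, and a two-case split ($|\xi-\zeta|$ comparable to $\max(|\xi|,|\zeta|)$ or not) closes the estimate. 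With that relocation of the difficulty your outline fills in to a complete proof; citing \cite{SZ} instead, as the paper does, is of course also acceptable.
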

These are standard, well known inequalities. A list containing them as well as others is collected in \cite[Appendix A]{SZ}.

\begin{lem}\label{p-2}
Let $p>2$, $\xi,\zeta\in\C^n$, $R>0$ such that $|\xi|, |\zeta|<R$, then
\begin{equation}
\left|\,|\xi|^{p-2}-|\zeta|^{p-2}\,\right|\leq C R^{\mu_1}|\xi-\zeta|^{\min(p-2,1)},
\end{equation}
where $C>0$, $\mu\geq0$ are constants that only depend on $p$.
\end{lem}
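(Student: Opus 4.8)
The plan is to reduce the vector estimate to a one-variable estimate for the function $g(\rho) = \rho^{p-2}$ on $[0,R)$, and then to recover the vector statement using the reverse triangle inequality $\big|\,|\xi| - |\zeta|\,\big| \leq |\xi - \zeta|$. Writing $s = |\xi|$ and $t = |\zeta|$, it suffices to show that
\begin{equation}
\big| s^{p-2} - t^{p-2} \big| \leq C R^{\mu_1}\, |s - t|^{\min(p-2,1)}, \qquad s,t\in[0,R).
\end{equation}
The exponent $\min(p-2,1)$ already signals that the argument should split into two regimes, according to whether the power $p-2$ is at least $1$ or lies strictly below it.

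First I would treat the case $p \geq 3$, where $p - 2 \geq 1$ and $g$ is $C^1$ on $[0,\infty)$ with $g'(\rho) = (p-2)\rho^{p-3}$. Since $s,t < R$, the mean value theorem gives $\big| s^{p-2} - t^{p-2}\big| \leq (p-2)R^{p-3}|s-t|$, which is exactly the desired bound with $\mu_1 = p - 3 \geq 0$ and exponent $\min(p-2,1) = 1$.

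Next, for $2 < p < 3$, the power $\alpha := p - 2$ lies in $(0,1)$ and $g(\rho) = \rho^\alpha$ is concave. Here I would invoke the elementary inequality $|s^\alpha - t^\alpha| \leq |s-t|^\alpha$, valid for $\alpha \in (0,1]$ and $s,t\geq 0$, which yields the bound directly with $\mu_1 = 0$ and exponent $\min(p-2,1) = p-2$. This inequality follows from the subadditivity of $\rho \mapsto \rho^\alpha$: assuming $s \geq t$ and setting $\tau = t/s \in [0,1]$, it reduces to $1 - \tau^\alpha \leq (1-\tau)^\alpha$.

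Finally, combining the two cases and applying the reverse triangle inequality to replace $|s-t| = \big|\,|\xi|-|\zeta|\,\big|$ by $|\xi - \zeta|$ gives the claim, with $\mu_1 = \max(p-3,0)$ and $C$ depending only on $p$. I do not expect a genuine obstacle here; the only point requiring care is choosing the correct regime of $p$ so that the power $R^{\mu_1}$ and the exponent $\min(p-2,1)$ are matched consistently — in particular checking that $\mu_1 \geq 0$ in both branches, which holds because $p > 2$.
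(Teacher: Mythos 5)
Your argument is correct and follows essentially the same route as the paper: a split at $p=3$, with a Lipschitz/mean-value bound giving the factor $R^{p-3}$ when $p\geq 3$, and the subadditivity of $t\mapsto t^{p-2}$ (combined with the triangle inequality, which the paper applies directly to $|\zeta+(\xi-\zeta)|^{p-2}$ rather than first reducing to scalars as you do) giving the H\"older-type exponent $p-2$ when $2<p<3$. The two proofs are interchangeable.
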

\begin{proof}
If $p\geq3$, we can apply Lemma \ref{vector-estimates} to obtain
\begin{equation}
\left|\,|\xi|^{p-2}-|\zeta|^{p-2}\,\right|\leq 2(p-2)R^{p-3}|\xi-\zeta|.
\end{equation}
Suppose now that $p\in(2,3)$, and also, without loss of generality, that $|\xi|\geq|\zeta|$. Then
\begin{multline}
\left|\,|\xi|^{p-2}-|\zeta|^{p-2}\,\right|
=|\zeta+\xi-\zeta|^{p-2}-|\zeta|^{p-2}\\[5pt]
\leq |\zeta|^{p-2}+|\xi-\zeta|^{p-2}-|\zeta|^{p-2}
=|\xi-\zeta|^{p-2}.
\end{multline}
\end{proof}

Finally, the following technical lemma will also be of use.
\begin{lem}\label{holder}
Let $p>2$, $\xi,\zeta\in\C^n$, $R>0$ such that $|\xi|, |\zeta|<R$,  and let $A$ and $B$ be the matrices with coefficients
\begin{equation}
A_{jk}=|\xi|^{p-4}\xi_j\overline{\xi_k}-|\zeta|^{p-4}\zeta_j\overline{\zeta_k},
\end{equation}
\begin{equation}
B_{jk}=|\xi|^{p-4}\xi_j{\xi_k}-|\zeta|^{p-4}\zeta_j{\zeta_k}.
\end{equation}
For any $n\times n$ matrix $H$ such that $H^T=H$, we have that
\begin{equation}
\left|\tr (AH+B\overline{H})\right|\leq CR^{\mu_2}||H||\,|\xi-\zeta|^{\frac{1}{4}\min({p-2},1)},
\end{equation}
where $||H||=\max_{jk}|H_{jk}|$ and $C,\mu_2>0$  are constants that depend only on $p$.
\end{lem}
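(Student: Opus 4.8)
The plan is to estimate the two traces separately and combine them, reducing everything to scalar estimates on the quantities $|\xi|^{p-4}\xi_j\overline{\xi_k}-|\zeta|^{p-4}\zeta_j\overline{\zeta_k}$ and their conjugate-free analogues. Writing out the trace gives
\[
\tr(AH+B\overline H)=\sum_{jk}A_{jk}H_{kj}+\sum_{jk}B_{jk}\overline{H}_{kj},
\]
so that, since $\|H\|=\max_{jk}|H_{jk}|$, the bound $|\tr(AH+B\overline H)|\leq n^2\|H\|(\max_{jk}|A_{jk}|+\max_{jk}|B_{jk}|)$ holds. Thus it suffices to bound a single generic entry of $A$ and of $B$ by $CR^{\mu_2}|\xi-\zeta|^{\frac14\min(p-2,1)}$, and the symmetry assumption $H^T=H$ plays no essential role beyond keeping the expression clean.

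First I would treat the matrix $A$, whose entries involve the factor $g(w):=|w|^{p-4}w_j\overline{w_k}$ for $w\in\C^n$. The natural idea is to write the difference telescopically,
\[
A_{jk}=\bigl(|\xi|^{p-4}-|\zeta|^{p-4}\bigr)\xi_j\overline{\xi_k}
+|\zeta|^{p-4}\bigl(\xi_j\overline{\xi_k}-\zeta_j\overline{\zeta_k}\bigr),
\]
and then estimate each piece. The second bracket is Lipschitz in $\xi-\zeta$ with a constant controlled by $R$, since $\xi_j\overline{\xi_k}-\zeta_j\overline{\zeta_k}=(\xi_j-\zeta_j)\overline{\xi_k}+\zeta_j(\overline{\xi_k}-\overline{\zeta_k})$, giving a factor $R|\xi-\zeta|$. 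The genuinely delicate piece is the scalar difference $|\xi|^{p-4}-|\zeta|^{p-4}$, because when $p<4$ the exponent $p-4$ is negative and $|w|^{p-4}$ blows up as $w\to0$; this is exactly why a Hölder exponent strictly less than $1$ must appear, and why the statement cannot promise Lipschitz dependence.

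The hard part, then, is controlling $|\,|\xi|^{p-4}-|\zeta|^{p-4}|$ uniformly with the correct exponent and the correct power of $R$. The strategy is to split into a near-origin regime and a bounded-away-from-origin regime depending on the size of $\min(|\xi|,|\zeta|)$ relative to $|\xi-\zeta|$. Away from the origin one can use the mean value theorem and the already-established Lemma \ref{p-2} (applied with the exponent $p-4$ rather than $p-2$, after checking the same proof goes through, or by interpolation), picking up a factor $R^{\mu}$ from the possibly-large derivative and $|\xi-\zeta|^{\min(p-2,1)}$ or a fractional power from the Hölder continuity. Near the origin, where both $|\xi|$ and $|\zeta|$ are small, one instead absorbs the singular factor by noting that the whole entry $|\xi|^{p-4}\xi_j\overline{\xi_k}$ behaves like $|\xi|^{p-2}$, which is Hölder of exponent $\min(p-2,1)$ by Lemma \ref{p-2}; the quartic root in the final exponent $\frac14\min(p-2,1)$ is the slack one spends to interpolate between these two regimes and to convert the $R$-dependence into the clean single power $R^{\mu_2}$.

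Finally I would observe that the matrix $B$ is handled verbatim by the same argument, replacing $\overline{\xi_k},\overline{\zeta_k}$ with $\xi_k,\zeta_k$ throughout, since no complex conjugation was used in any of the modulus estimates. Collecting the contributions, choosing $\mu_2$ to be the largest exponent of $R$ that appears across the regime split, and using that $R$ is bounded below away from $0$ only where needed (or simply absorbing small-$R$ behavior into the Hölder factor), yields the claimed inequality with $C$ and $\mu_2$ depending only on $p$. I expect the bookkeeping of the regime split and the verification that the exponent never exceeds $\frac14\min(p-2,1)$ to be the only real obstacle; everything else is the triangle inequality together with Lemmas \ref{vector-estimates} and \ref{p-2}.
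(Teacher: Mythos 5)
Your approach is correct in substance but follows a genuinely different route from the paper. The paper never estimates individual entries: it splits $A$ and $B$ into their trace parts and trace-free parts $\tilde A$, $\tilde B$, bounds the trace parts by hand (the term $|\,|\xi|^{p-4}\xi\cdot\xi-|\zeta|^{p-4}\zeta\cdot\zeta|$ requires its own factorization argument), and then controls $|\tr(\tilde AH)+\tr(\tilde B\overline H)|$ by the Cauchy--Schwarz inequality for the Hilbert--Schmidt pairing, computing $\tr(\tilde A\tilde A^\ast)+\tr(\tilde B\tilde B^\ast)$ explicitly so that the cross terms combine into $4|\xi|^2|\zeta|^2-(\xi\cdot\overline\zeta+\overline\xi\cdot\zeta)^2$, which factors and can be estimated; this is where the exponent degrades to $\tfrac14\min(p-2,1)$. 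Your entrywise strategy is simpler and, if carried out, actually stronger: each entry is the increment of a function $\Phi(w)=|w|^{p-4}w_j\overline{w_k}$ that is positively homogeneous of degree $p-2$ and $C^1$ away from the origin with $|\nabla\Phi(w)|\leq C|w|^{p-3}$, and the standard two-regime argument (if $|\xi-\zeta|\gtrsim\max(|\xi|,|\zeta|)$ use $|\Phi(w)|\leq|w|^{p-2}$ directly; otherwise the segment from $\zeta$ to $\xi$ stays away from the origin and the mean value theorem applies) yields the H\"older exponent $\min(p-2,1)$, with no use of $H^T=H$ and with the factor $R^{\mu_2}$ only needed to trade down to the weaker stated exponent. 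Two points in your write-up deserve sharpening, since as written they hide where the real work is. First, the telescoping decomposition $A_{jk}=(|\xi|^{p-4}-|\zeta|^{p-4})\xi_j\overline{\xi_k}+|\zeta|^{p-4}(\xi_j\overline{\xi_k}-\zeta_j\overline{\zeta_k})$ is not usable globally: when $\min(|\xi|,|\zeta|)\ll\max(|\xi|,|\zeta|)$ both pieces blow up individually (the ``Lipschitz'' second bracket still carries the unbounded factor $|\zeta|^{p-4}$), and only their sum is controlled; so the regime split is not mere bookkeeping but the step that makes the decomposition legitimate. Second, Lemma \ref{p-2} cannot simply be ``applied with the exponent $p-4$'': its proof rests on the subadditivity $|a+b|^{s}\leq|a|^{s}+|b|^{s}$ for $s\in(0,1]$, which fails for negative exponents, so in the away-from-origin regime you must argue via the mean value theorem together with the lower bound on $\min(|\xi|,|\zeta|)$ furnished by that regime. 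With those two repairs your argument closes, and in fact proves a sharper inequality than the lemma states.
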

\begin{proof}
The case when $p\geq3$ is relatively easier, so we will not write it out in full. For now assume that $2<p<3$.

First we define a new matrices $\tilde A=A-\frac{2}{n}\tr(A)I_{n\times n}$, $\tilde B=B-\frac{2}{n}\tr(B)I_{n\times n}$, with coefficients
\begin{equation}
\tilde A_{jk}=|\xi|^{p-4}(\xi_j\overline{\xi_k}-\frac{2}{n}|\xi|^2\delta_{jk})-|\zeta|^{p-4}(\zeta_j\overline{\zeta_k}-\frac{2}{n}|\zeta|^2\delta_{jk}),
\end{equation}
\begin{equation}
\tilde B_{jk}=|\xi|^{p-4}(\xi_j{\xi_k}-\frac{2}{n}\xi^2\delta_{jk})-|\zeta|^{p-4}(\zeta_j{\zeta_k}-\frac{2}{n}\zeta^2\delta_{jk}).
\end{equation}
Note that
\begin{multline}
\frac{1}{n}\left|\tr\left(\tr(A)I_{n\times n}H\right)\right|\\[5pt]
=\left|\,|\xi|^{p-2}-|\zeta|^{p-2}\right|\left|\tr(H)\right|
\leq \left|\tr(H)\right|\,|\xi-\zeta|^{p-2},
\end{multline}
and also 
\begin{equation}
\frac{1}{n}\left|\tr\left(\tr(B)I_{n\times n}\overline{H}\right)\right|=\left|\,|\xi|^{p-4}\xi^2-|\zeta|^{p-4}\zeta^2   \right|\,\left|\tr(\overline{H})\right|.
\end{equation}
We observe that
\begin{multline}
\left|\,|\xi|^{p-4}\xi^2-|\zeta|^{p-4}\zeta^2   \right|
=\left|\left(|\xi|^{\frac{p-4}{2}}\xi+|\zeta|^{\frac{p-4}{2}}\zeta \right)\cdot\left( |\xi|^{\frac{p-4}{2}}\xi-|\zeta|^{\frac{p-4}{2}}\zeta   \right)   \right|\\[5pt]
\leq \left( |\xi|^{\frac{p-2}{2}}+|\zeta|^{\frac{p-2}{2}} \right)
\left| \left( |\xi|^{\frac{p-4}{2}}\xi-|\zeta|^{\frac{p-4}{2}}\zeta   \right)\cdot
\left( |\xi|^{\frac{p-4}{2}}\overline{\xi}-|\zeta|^{\frac{p-4}{2}}\overline{\zeta}   \right)  \right|^{\frac{1}{2}}\\[5pt]
\leq 2R^{\frac{p-2}{2}}\left|\,|\xi|^{p-2}+|\zeta|^{p-2}-2|\xi|^{\frac{p-2}{2}}|\zeta|^{\frac{p-2}{2}}
+|\xi|^{\frac{p-4}{2}}|\zeta|^{\frac{p-4}{2}}\left(2|\xi|\,|\zeta|-\xi\cdot\overline{\zeta}-\overline{\xi}\cdot\zeta  \right)  \right|^{\frac{1}{2}}\\[5pt]
=2R^{\frac{p-2}{2}}\left|\left(|\xi|^{\frac{p-2}{2}}-|\zeta|^{\frac{p-2}{2}}\right)^2
+|\xi|^{\frac{p-4}{2}}|\zeta|^{\frac{p-4}{2}}\left(2|\xi|\,|\zeta|-\xi\cdot\overline{\zeta}-\overline{\xi}\cdot\zeta  \right)  \right|^{\frac{1}{2}}.
\end{multline}
In order to complete the estimate, suppose $|\xi|\leq|\zeta|$ and look at the following separately
\begin{multline}
\left|\,|\xi|\,|\zeta|-\xi\cdot\overline{\zeta}\right|
=\left|\,|\xi|\,|\zeta|-|\xi|^2+\xi\cdot(\overline{\xi}-\overline{\zeta})    \right|
\leq 2|\xi|\,|\xi-\zeta|\\[5pt]
\leq 2 |\xi|\left(|\xi|+|\zeta|  \right)^{1-\theta}|\xi-\zeta|^{\theta}
\leq 2^{2-\theta}|\xi|\,|\zeta|^{1-\theta}|\xi-\zeta|^{\theta},
\end{multline} 
where $\theta\in[0,1]$ can be chosen arbitrarily. 
We choose $\theta=\frac{p-2}{2}$. In general, after repeating this argument in all the cases that arise, we have
\begin{equation}
\left|2|\xi|\,|\zeta|-\xi\cdot\overline{\zeta}-\overline{\xi}\cdot\zeta\right|
\leq 2^{3-\frac{p-2}{2}}\min(|\xi|,|\zeta|)\max(|\xi|,|\zeta|)^{\frac{4-p}{2}}|\xi-\zeta|^{\frac{p-2}{2}}.
\end{equation}
We then have that
\begin{equation}
\left|\,|\xi|^{p-4}\xi^2-|\zeta|^{p-4}\zeta^2   \right|
\leq C R^{\frac{p-2}{2}}\left( |\xi-\zeta|^{\frac{p-2}{2}}+  R^{\frac{p-2}{4}} |\xi-\zeta|^{\frac{p-2}{4}}\right),
\end{equation}
so
\begin{equation}
\frac{1}{n}\left|\tr\left(\tr(B)I_{n\times n}\overline{H}\right)\right|
=C R^{\frac{3(p-2)}{4}}\left|\tr({H})\right|\,|\xi-\zeta|^{\frac{p-2}{4}}  .
\end{equation}

We can now replace the matrices $A$ and $B$ by $\tilde A$ and $\tilde B$, respectively. By the Cauchy-Schwartz inequality, we have
\begin{multline}
\left|\tr(\tilde AH+\tilde B\overline{H})\right|
\leq \left|\tr(\tilde AH)\right|+\left|\tr(\tilde B\overline{H})\right|\\[5pt]
\leq\left[\left(\tr(\tilde A{\tilde A}^\ast) \right)^{\frac{1}{2}}+ \left(\tr(\tilde B{\tilde B}^\ast) \right)^{\frac{1}{2}}\right] \left(\tr(H{H}^\ast)\right)^{\frac{1}{2}}\\[5pt]
\leq 2^{\frac{1}{2}}\left[\tr(\tilde A{\tilde A}^\ast) + \tr(\tilde B{\tilde B}^\ast) \right]^{\frac{1}{2}} \left(\tr(H{H}^\ast)\right)^{\frac{1}{2}}.
\end{multline}
We can compute that
\begin{equation}
\tr(\tilde A{\tilde A}^\ast)=\left( |\xi|^{p-2}-|\zeta|^{p-2} \right)^2+|\xi|^{p-4}|\zeta|^{p-4}\left(2|\xi|^2|\zeta|^2-2|\xi\cdot\overline{\zeta}|^2  \right),
\end{equation}
and
\begin{equation}
\tr(\tilde B{\tilde B}^\ast)=\left( |\xi|^{p-2}-|\zeta|^{p-2} \right)^2+|\xi|^{p-4}|\zeta|^{p-4}\left(2|\xi|^2|\zeta|^2-(\xi\cdot\overline{\zeta})^2-(\overline{\xi}\cdot{\zeta})^2  \right),
\end{equation}
Then
\begin{multline}
\left|\tr(\tilde A{\tilde A}^\ast)+\tr(\tilde B{\tilde B}^\ast)\right|
\leq 2 |\xi-\zeta|^{2(p-2)}\\[5pt]
+|\xi|^{p-4}|\zeta|^{p-4}\left[4|\xi|^2|\zeta|^2-(\xi\cdot\overline{\zeta})^2-(\overline{\xi}\cdot{\zeta})^2  -2(\xi\cdot\overline{\zeta})(\overline{\xi}\cdot{\zeta})\right]\\[5pt]
=2 |\xi-\zeta|^{2(p-2)} 
+|\xi|^{p-4}|\zeta|^{p-4}\left[4|\xi|^2|\zeta|^2-\left(\xi\cdot\overline{\zeta}+ \overline{\xi}\cdot{\zeta} \right)^2   \right].
\end{multline}
Notice that
\begin{multline}
\left|4|\xi|^2|\zeta|^2-\left(\xi\cdot\overline{\zeta}+ \overline{\xi}\cdot{\zeta} \right)^2   \right|\\[5pt]
=\left| 2|\xi|\,|\zeta|+\xi\cdot\overline{\zeta}+ \overline{\xi}\cdot{\zeta}\right|\,
\left| 2|\xi|\,|\zeta|-\xi\cdot\overline{\zeta}- \overline{\xi}\cdot{\zeta}\right|\\[5pt]
\leq 2^{5-\frac{p-2}{2}}|\xi|\,|\zeta|\min(|\xi|,|\zeta|)\max(|\xi|,|\zeta|)^{\frac{4-p}{2}}|\xi-\zeta|^{\frac{p-2}{2}}.
\end{multline}
It follows that 
\begin{equation}
\left|\tr(\tilde A{\tilde A}^\ast)+\tr(\tilde B{\tilde B}^\ast)\right|
\leq CR^{\frac{3(p-2)}{2}}|\xi-\zeta|^{\frac{p-2}{2}}.
\end{equation}
The lemma is now proven in the $p\in(2,3)$ case.

The main differences in the proof when $p\geq 3$ are that now
\begin{equation}
\left||\xi|^{p-2}-|\zeta|^{p-2}\right|\leq (p-2)(|\xi|^{p-3}+|\zeta|^{p-3})|\xi-\zeta|,
\end{equation}
and that it is enough to choose $\theta=\frac{1}{2}$.
\end{proof}

\bibliography{p-quasilinear}
\bibliographystyle{plain}

\end{document}